\DeclareMathAlphabet{\mathpzc}{OT1}{pzc}{m}{it}
\renewcommand{\geq}{\geqslant}
\theoremstyle{plain}
\newcommand{\refnewtheoremn}[4]{
\newaliascnt{#1}{#2}
\newtheorem{#1}[#1]{#3}
\aliascntresetthe{#1}
\expandafter\providecommand\csname #1autorefname\endcsname{#4}}
\newcommand{\refnewtheorem}[3]{\refnewtheoremn{#1}{#2}{#3}{#3}}
\def\makeCal#1{
\expandafter\newcommand\csname c#1\endcsname{\mathcal{#1}}}
\def\makeBB#1{
\expandafter\newcommand\csname b#1\endcsname{\mathbb{#1}}}
\def\makeFrak#1{
\expandafter\newcommand\csname f#1\endcsname{\mathfrak{#1}}}
\edef\y{\@Alph\count@}
\newtheorem{thm}{Theorem}[section]
\theoremstyle{definition}
\newcommand{\Coh}{\operatorname{Coh}}
\newcommand{\Stab}{\operatorname{Stab}}
\newcommand{\Hom}{\operatorname{Hom}}
\newcommand{\isom}{\cong}
\renewcommand{\O}{\mathscr{O}}
\newcommand{\SL}{\operatorname{SL}}
\newcommand{\GL}{\operatorname{GL}}
\newcommand{\D}{D}
\newcommand{\Forg}{\operatorname{Forg}}
\newcommand{\rep}{\operatorname{rep}}
\newcommand{\HHilb}{\operatorname{Hilb}^H}
\newcommand{\ch}{\operatorname{ch}}
\renewcommand{\dim}{\operatorname{dim}}
\newcommand{\tensor}{\otimes}
\newcommand{\DT}{\operatorname{DT}}
\title[Invariant stability conditions]{Invariant stability conditions on certain local Calabi-Yau threefolds}
\author{Tom Bridgeland}
\address[T. Bridgeland]{Department of Pure Mathematics\\ University of Sheffield\\ Hicks Building, Hounsfield Road\\ Sheffield, S3 7RH\\ UK}
\email{t.bridgeland@sheffield.ac.uk}
\author{Fabrizio Del Monte}
\address[Fabrizio Del Monte]{School of Mathematics \\ University of Birmingham \\ Watson Building \\ Edgbaston \\  Birmingham B15 2TT \\ UK}
\email{f.delmonte@bham.ac.uk}
\author{Luca Giovenzana}
\address[Luca Giovenzana]{Department of Pure Mathematics\\ University of Sheffield\\ Hicks Building, Hounsfield Road\\ Sheffield, S3 7RH\\ UK}
\email{l.giovenzana@sheffield.ac.uk}
\date{}
\begin{document}

\begin{abstract}
We apply results on inducing stability conditions to local Calabi-Yau threefolds and obtain applications to Donaldson-Thomas (DT) theory. 
 A basic example is the total space of the canonical bundle of $Z=\bP^1\times \bP^1$. We  use a result of Dell to construct stability conditions on the derived category of $X$ for which all stable objects can be explicitly described. We relate them to stability conditions on the resolved conifold $Y=\O_{\bP^1}(-1)^{\oplus 2}$ in two ways: geometrically via the McKay correspondence, and algebraically via a  quotienting operation on quivers with potential.  These stability conditions were first discussed in the physics literature  by Closset and del Zotto, and were constructed mathematically  by  Xiong by a different method. We obtain a complete description of the corresponding DT invariants, from which we can conclude that they define analytic wall-crossing structures in the sense of Kontsevich and Soibelman.    In the last section we discuss several other examples of a similar flavour. 
\end{abstract}

\maketitle

\section{Introduction}

Let $Z$ be a smooth rational surface  and denote by $X=\omega_Z$ the total space of its canonical bundle. 
Such quasi-projective Calabi-Yau threefolds  have been much-studied by both mathematicians and physicists. They are more amenable than compact examples such as quintic threefolds $X\subset \bP^4$ but richer than models such as the resolved conifold $X=\O_{\bP^1}(-1)^{\oplus 2}$ which contain no compact divisors.
This intermediate level of complexity is reflected in our understanding of the (bounded, compactly supported)  derived category of  coherent sheaves $\D_c(X)$ and the associated space of stability conditions $\Stab(X)$ \cite{Stab}.

For the resolved conifold  we have a complete description of a connected component of $\Stab(X)$, and for each stability condition in this component the subcategory  of stable objects can be explicitly described \cite{Toda}. In contrast,  for a quintic  threefold the existence of stability conditions has only been demonstrated relatively  recently \cite{Li19}, and almost nothing is known about the global geometry of $\Stab(X)$. The case of local threefolds  $X=\omega_Z$ lies somewhere between these extremes. It is easy to construct examples of stability conditions,  and  in special cases we even know something about the global geometry of $\Stab(X)$ \cite{BM,BlP}.
Nonetheless,  a complete description of the space of stability conditions is lacking, and for a generic stability condition the category of stable objects is extremely complicated, giving rise for example to moduli spaces of arbitrarily high dimensions. 

It came as a surprise therefore, when work by Longhi and one of us \cite{DML2023}, building upon earlier
observations of  Closset and del Zotto \cite{CZ}, suggested that for some rational surfaces $Z$ there is a special locus in the  space of  stability conditions on $X=\omega_Z$ whose geometry can be completely understood, and  for which it is possible to explicitly describe all stable objects. 
The simplest example occurs when  $Z=\bP^1\times \bP^1$. In this case the special stability conditions have a spectrum of stable objects  closely resembling that for the resolved conifold.   These stability conditions were later constructed mathematically by Yirui Xiong \cite{Xiong2024} using an argument based on that of \cite{DML2023}, but the essential mystery remained: why do there exist such simple stability conditions on $X$?

The aim of this note is to give a simple answer to this question using an existing result on induced  stability conditions \cite{Dell, MMS,Pol}. We show that a finite group acts  on the stability space of $X$, and the special stability conditions referred to above are precisely those that are invariant under this action. In fact we can construct the group action in two ways: geometrically using the McKay correspondence, or  algebraically via  symmetries of quivers with potential. After explaining the two approaches  we will describe several examples of the basic phenomenon and discuss some applications to  Donaldson-Thomas (DT) invariants. 

\subsection{Inducing stability conditions}
We now give  a brief summary of our two main constructions, focusing on the the basic example  $Z=\bP^1\times \bP^1$. We refer the reader to the  body of the paper for more precise definitions and statements of results.

The  geometric approach is as follows. The local threefold $X=\omega_Z$ arises as the crepant resolution of the quotient of the resolved conifold $Y=\O_{\bP^1}(-1)^{\oplus 2}$ by the finite group $H=\bZ_2$ acting by multiplication by $-1$ on the fibres of the projection to $\bP^1$. The derived McKay correspondence \cite{BKR} then gives an equivalence of (bounded, compactly-supported) derived categories 
$\Phi\colon \D_c(X)\to \D^H_c(Y)$.
The equivariant derived category $\D^H_c(Y)$  carries an action of the  group of characters $G=\Hom_{\bZ}(H,\bC^*)$ by tensor product. Using $\Phi$ we can transfer this to an action of $G$ on $\D_c(X)$. A result of Dell \cite{Dell}, which in this geometric setting was previously obtained by Polishchuk \cite{Pol},  then gives an isomorphism of fixed loci
\[\Stab(X)^G\isom \Stab(Y)^{H}.\]
There is a standard  connected component $\Stab_0(Y)\subset \Stab(Y)$ which is well-understood, and it is easy to see that the action of $H$ on this component is in fact trivial. Thus we obtain a connected component $\Stab_0(X)^G$ of the fixed locus $\Stab(X)^G$ and 
 an identification \begin{equation}\label{sheaf}\Stab_0(X)^G\isom \Stab_0(Y).\end{equation}
The special stability conditions referred to  above are those contained in the closed subset $\Stab_0(X)^G\subset \Stab(X)$, and will henceforth be referred to as \emph{invariant stability conditions}. In \cite{DML2023B} the subset $\Stab_0(X)^G$ was called the fine-tuned stratum of $\Stab(X)$.

We can approach the same stability conditions algebraically  using quivers with potential. The exceptional collection $\big[\O,\O(1,0),\O(1,1),\O(2,1)\big]$
on $Z$ pulls back  to give a tilting bundle on $X=\omega_Z$. This leads to a derived equivalence between  coherent sheaves on $X$ and  representations of the Jacobi algebra of the quiver with potential $(Q,W)$ whose underlying quiver $Q$ is shown in Figure \ref{fig_one}(a). Similarly, pulling back the exceptional collection $\big[\O,\O(1)\big]$ on $\bP^1$  gives a tilting bundle on $Y=\O_{\bP^1}(-1)^{\oplus 2}$ whose endomorphism  algebra is described by a quiver with potential $(Q',W')$ whose quiver $Q'$ is shown in Figure \ref{fig_one}(b).\medskip

\begin{figure}[h!]
\centering
\begin{subfigure}[m]{0.3\linewidth}
    \centering
    \includegraphics[width=\linewidth]{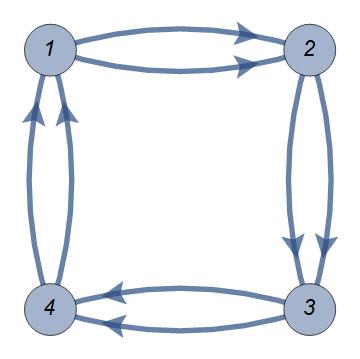}
   \caption{$Q$}
 \end{subfigure}\qquad\qquad\qquad
 \begin{subfigure}[m]{0.3\linewidth}
    \centering
    \includegraphics[width=\linewidth]{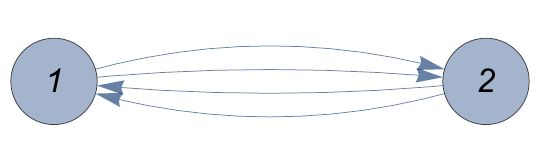}
\caption{$Q'$}
 \end{subfigure}
 \caption{Quivers for $X=\omega_{\bP^1\times\bP^1}$ and $Y=\O_{\bP^1}(-1)^{\oplus 2}$}\label{fig_one}
\end{figure}
The potentials are respectively
\begin{equation}
    W=-X_{1,2}^{(1)} X_{2,3}^{(1)} X_{3,4}^{(1)}X_{4,1}^{(1)}-X_{1,2}^{(2)} X_{2,3}^{(2)} X_{3,4}^{(2)} X_{4,1}^{(2)}+X_{1,2}^{(2)} X_{2,3}^{(1)} X_{3,4}^{(2)} X_{4,1}^{(1)}+X_{1,2}^{(1)} X_{2,3}^{(2)} X_{3,4}^{(1)} X_{4,1}^{(2)},
\end{equation}
and
\begin{equation}
    W'=-X_{1,2}^{(1)} X_{2,1}^{(1)} X_{1,2}^{(2)}X_{2,1}^{(2)}+X_{1,2}^{(2)} X_{2,1}^{(1)} X_{1,2}^{(1)} X_{2,1}^{(2)},
\end{equation}
where $X_{i,j}^{(k)}$ denotes the  $k$-th arrow from the vertex $i$ to the vertex $j$.

Rotation by a half turn defines a free action of the  group $G=\bZ_2$  on the quiver $Q$ inducing the permutation $(1,3)(2,4)$ on the nodes. This action preserves  the potential  $W$, and the pair  $(Q',W')$ arises naturally as the quotient. It follows  that the category of representations of $(Q,W)$ is  equivalent to the category of $H$-equivariant representations  of  $(Q',W')$,  where the group of characters $H=\Hom_{\bZ}(G,\bC^*)$  acts on the Jacobi algebra of $(Q',W')$ via  certain rescalings of the arrows.

We consider the space of stability conditions $\Stab(Q,W)$ on the  (bounded, finite-dimensional) derived category of the Jacobi algebra of the quiver with potential $(Q,W)$, and similarly for $(Q',W')$.
 Dell's result  gives an isomorphism of fixed loci
\[\Stab(Q,W)^G\isom \Stab(Q',W')^H,\] and once again the action of $H$ on the standard connected component $\Stab_0(Q',W')\subset \Stab(Q',W')$ is trivial. Thus  we obtain a connected component  $\Stab_0(Q,W)^G\subset \Stab(Q,W)^G$ and an identification
 \[\Stab_0(Q,W)^G\isom \Stab_0(Q',W').\]
It is not hard to show that this identification corresponds to \eqref{sheaf}  under the tilting equivalence relating  coherent sheaves to quiver representations. 

In Section \ref{sec:ex} we consider several other examples of the same phenomenon:
\begin{itemize}
    \item [(i)] A pseudo del Pezzo surface $Z=PdP_5$ obtained by blowing up $\bP^2$ in a certain non-generic set of 5 points. The   local threefold $X=\omega_Z$ is a crepant resolution of a quotient of the resolved conifold $Y=\O_{\bP^1}(-1)^{\oplus 2}$ by the group  $G=\bZ_2\times\bZ_2$.\smallskip
    
    \item[(ii)] The del Pezzo surface $Z=dP_3$ obtained by blowing up $\bP^2$ in 3 distinct points. We can define two distinct invariant loci in the stability space of the local threefold $X=\omega_Z$ by using  actions of the groups $G=\bZ_2$ or $G=\bZ_3$ on an appropriate   quiver with potential.\smallskip
    
    \item[(iii)] An infinite family of toric $CY_3$ geometries $X$  usually referred to as $Y^{N,0}$. The space $Y^{N,0}$ is a crepant resolution of a $\bZ_N$ quotient of the resolved conifold $Y=\O_{\bP^1}(-1)^{\oplus 2}$, the case $N=2$ coinciding with our basic example $X=\omega_{\bP^1\times\bP^1}$. 
\end{itemize}

\subsection{Donaldson-Thomas invariants}

 The result we use to induce stability conditions provides information not only about the spaces of stability conditions but also about the categories of semistable objects. In the case  $Z=\bP^1\times \bP^1$ described above, the central charges of the semistable objects lie on  the collection of rays shown in Figure~\ref{fig:raydiag}.

The categories of semistable objects on all rays but the central one are easily understood: there are just two  stable objects which are moreover spherical and have no extensions between them. The category of semistable objects on the central ray is equivalent to the category of zero-dimensional $H$-equivariant sheaves on $Y$, and the relevant DT invariants can be read off  from the orbifold vertex calculations of Bryan, Cadman and Young \cite{BCY}. We obtain

 \begin{thm} (= Theorem \ref{thm:DTP1P1}.)
 \label{thm_intro}
 Let $X=\omega_Z$ be the local threefold  corresponding to the del Pezzo surface  $Z=\bP^1\times \bP^1$, and let $(Q,W)$ be the quiver with potential  of Figure \ref{fig_one}(a). Let $\sigma\in \Stab(Q,W)^G$  be an invariant stability condition as above.  Then 
    the nonzero DT invariants are
    \[\Omega_{\sigma}((n+1)\gamma_1+n\gamma_2)= \Omega_{\sigma}(n\gamma_1+(n+1)\gamma_2)=1, \qquad n\in \bZ,\]\[\Omega_{\sigma}((n+1)\gamma_3+n\gamma_4)=\Omega_{\sigma}(n\gamma_3+(n+1)\gamma_4)=1,\qquad n\in \bZ,\]
    \[\Omega_{\sigma}((n+1)(\gamma_1+\gamma_2)+n(\gamma_3+\gamma_4))=\Omega_{\sigma} (n(\gamma_1+\gamma_2)+(n+1)(\gamma_3+\gamma_4))=-2, \qquad n\in \bZ,\]\[ \Omega_{\sigma}( n(\gamma_1+\gamma_2+\gamma_3+\gamma_4))=-4, \qquad n\in \bZ\setminus\{0\},\]
    where $\gamma_1,\gamma_2,\gamma_3, \gamma_4\in \bZ^{Q_0}$ are the classes defined by the vertices of the quiver.
\end{thm}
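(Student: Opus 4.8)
The whole computation rests on transporting the problem across the two equivalences described in the introduction: first the quotient equivalence $\D_c^{\mathrm{fd}}(Q,W)\simeq \D_c^{\mathrm{fd},H}(Q',W')$, and then, via tilting, the identification of the latter with $H$-equivariant zero-dimensional sheaves on the resolved conifold $Y=\O_{\bP^1}(-1)^{\oplus 2}$. Under the identification $\Stab_0(Q,W)^G\isom\Stab_0(Q',W')$, an invariant stability condition $\sigma$ on $(Q,W)$ corresponds to a stability condition $\sigma'$ on $(Q',W')$ lying in the standard component; the DT invariants $\Omega_\sigma(\gamma)$ are then governed by the $H$-equivariant refinement of the DT invariants of $\sigma'$, decomposed according to the characters of $H=\bZ_2$. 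So the strategy is: (1) identify the lattice map $\bZ^{Q_0}\to \bZ^{Q'_0}$ induced on Grothendieck groups by the quotient, together with the residual $G$-grading that records the character; (2) recall from the conifold picture (cf.\ \cite{Toda}) exactly which classes support nonzero semistable objects for $\sigma'$ and what the invariants are; (3) for the ``off-central'' rays, observe directly that the stable objects are the two spherical objects with no extensions, pull them back to $(Q,W)$, and read off the four families $\Omega_\sigma((n+1)\gamma_i+n\gamma_j)=1$; (4) for the central ray, where the semistable category is that of zero-dimensional ($H$-equivariant) sheaves on $Y$, invoke the orbifold-vertex computation of Bryan--Cadman--Young \cite{BCY} to extract the remaining invariants.

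\smallskip

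\textbf{Step-by-step.} First I would set up notation: the vertices $\gamma_1,\dots,\gamma_4$ of $Q$, the vertices $\gamma_1',\gamma_2'$ of $Q'$, and the quotient map sending $\gamma_1,\gamma_3\mapsto \gamma_1'$ and $\gamma_2,\gamma_4\mapsto\gamma_2'$ (matching the permutation $(1,3)(2,4)$), with the $\bZ_2$-grading separating $\{\gamma_1,\gamma_2\}$ from $\{\gamma_3,\gamma_4\}$. Next I would describe the heart of $\sigma$ and of $\sigma'$: $\sigma'$ lies in $\Stab_0(Q',W')$, whose geometry and semistable objects are completely understood, so I can quote that the only dimension vectors carrying $\sigma'$-semistable objects are the ``conifold'' classes $(n{+}1)\gamma_1'+n\gamma_2'$, $n\gamma_1'+(n{+}1)\gamma_2'$ (each contributing a single spherical stable object, $\Omega=1$), together with multiples $n(\gamma_1'+\gamma_2')$ of the class of a point on $\bP^1$ (contributing $\Omega=-2$, the signed Euler characteristic of $\bP^1$ appearing once per vertex, or more precisely the known conifold point invariant). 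Then, for each such class, I would compute its preimages in $\bZ^{Q_0}$ and the $H$-decomposition of the corresponding (equivariant) semistable object; the off-central conifold objects are spherical and rigid, so each lifts to exactly one object on $(Q,W)$ in one of the four classes $(n{+}1)\gamma_1+n\gamma_2$, etc., giving $\Omega_\sigma=1$. For the central ray, I would identify the $\sigma$-semistable objects of class $n(\gamma_1{+}\gamma_2{+}\gamma_3{+}\gamma_4)$ with zero-dimensional $H$-equivariant sheaves on $Y$ of the appropriate character, and the classes $(n{+}1)(\gamma_1{+}\gamma_2)+n(\gamma_3{+}\gamma_4)$ (and its reflection) with the nontrivial-character graded pieces; the DT invariants of these are exactly the orbifold DT invariants of $[Y/H]$ computed in \cite{BCY}, which yield $-4$ for the trivial-character (genuine orbifold-point) classes and $-2$ for the twisted sectors.

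\smallskip

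\textbf{Main obstacle.} The genuinely delicate part is Step (4): matching the grading conventions so that the right graded pieces of the orbifold-vertex generating function \cite{BCY} land on the right quiver classes, and confirming that what \cite{BCY} compute (orbifold Donaldson--Thomas / Pandharipande--Thomas-type invariants for $[\text{conifold}/\bZ_2]$) agrees, after the usual change of variables and the identification of the relevant subcategory of semistable objects on the central ray with $H$-equivariant length-$n$ sheaves, with the refined DT invariants $\Omega_\sigma$ in the sense of Joyce--Song / Kontsevich--Soibelman used here. Concretely I expect to need: (a) a clean statement that on the central ray the abelian category of $\sigma$-semistable objects of phase equal to that ray is equivalent to $\operatorname{Coh}_0^H(Y)$ (coherent sheaves of finite length, $H$-equivariant), which should follow from the tilting description plus the fact that $\sigma'$ puts all of $\operatorname{Coh}_0(Y)$ in a single phase; and (b) the extraction of the numbers $-2,-4$ from the \cite{BCY} vertex, for which the key input is that the $\bZ_2$-orbifold point of $\O_{\bP^1}(-1)^{\oplus 2}$ behaves like $[\bC^2/\bZ_2]\times\bC$ fibred over $\bP^1$, so the twisted invariants are $(-2)$ per fibre and the untwisted ones collect the two twisted sectors plus the $\bP^1$ factor into $-4$. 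Everything else — the lattice bookkeeping, the lift of rigid spherical objects, the triviality of the $H$-action on $\Stab_0$ — is routine given the results already established in the paper.
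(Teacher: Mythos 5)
Your proposal is correct and follows essentially the same route as the paper: off-central rays are handled by observing that the only stable objects there are two spherical objects with no extensions (hence $\Omega=1$), while the central ray is identified with zero-dimensional $H$-equivariant sheaves on the resolved conifold, whose invariants $-2$ and $-4$ are read off from the Bryan--Cadman--Young orbifold vertex via the Joyce--Song dictionary. The only cosmetic difference is that you transport the problem through the quiver quotient $(Q,W)\to(Q',W')$ whereas the paper works directly through the McKay equivalence of Theorem \ref{chee}, but the paper itself shows these give the same identification of semistable objects.
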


The wall-crossing formula shows that in principle Theorem \ref{thm_intro} determines the DT invariants for all  stability conditions in the same connected component. Note that for a generic stability condition these could be very complicated, containing information about virtual Euler characteristics of moduli spaces of semistable sheaves on $Z$ of arbitrary Chern character.

 \begin{figure}[h]
     \centering
\begin{tikzpicture}[scale=1.5]

\foreach \k in {-10,-9,...,10} {
    \draw[thick, ->] (0,0) -- (\k/2+0.2,2);
    \draw[thick, ->] (0,0) -- (\k/2-0.2,-2);
}

\foreach \n in {-2,-1,1} {
    \draw[red, thick, ->] (0,0) -- (1+2*\n,0);
\draw[red, thick, ->] (0,0) -- (-1-2*\n,0);
}

\foreach \m in {-2,-1,1,2} {
    \draw[red, thick, ->] (0,0) -- (2*\m,0);
}

\node at (3,2.5) {$Z_{\gamma_1+k(\gamma_1+\gamma_2)} = Z_{\gamma_3+k(\gamma_3+\gamma_4)}$};
\node at (-3,-2.5) {$Z_{\gamma_2+k(\gamma_1+\gamma_2)} = Z_{\gamma_4+k(\gamma_3+\gamma_4)}$};
\node at (4,0.5) {\color{red}$Z_{\gamma_1+\gamma_2+k\delta} = Z_{\gamma_3+\gamma_4+k\delta}$};
\node at (3,-0.5) {\color{red}$Z_{k\delta}$};

\end{tikzpicture}
     \caption{Ray diagram for the invariant stability condition on $\omega_{\mathbb{P}^1\times\mathbb{P}^1}$.}
     \label{fig:raydiag}
 \end{figure}
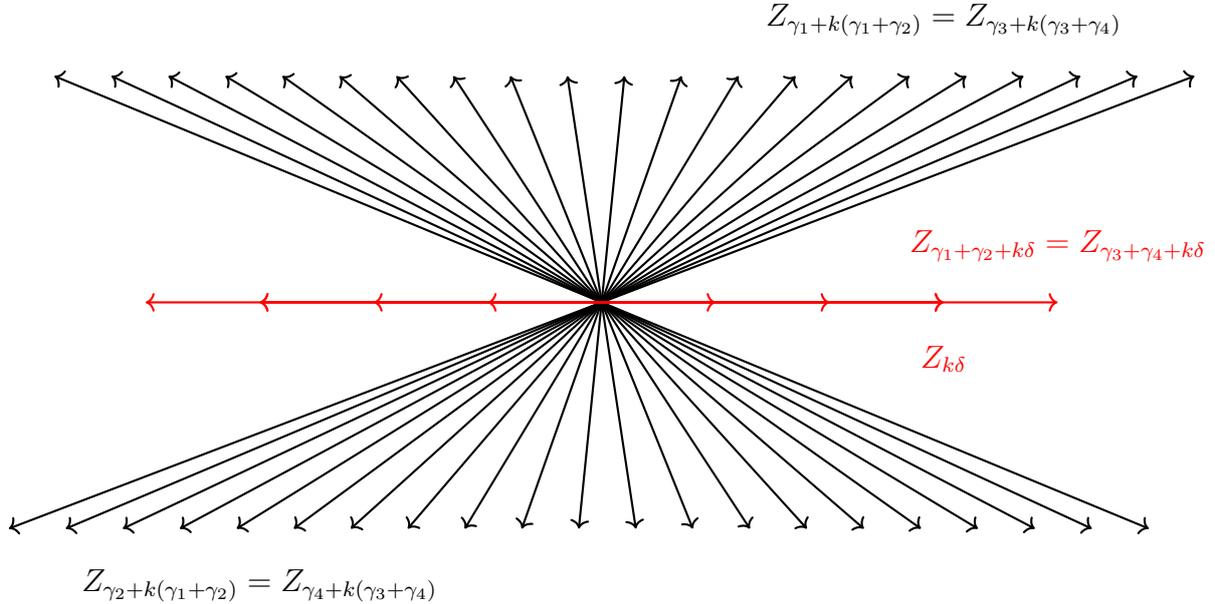

\subsection{Further applications}
One interesting application of Theorem \ref{thm_intro} uses a deep result of Kontsevich and Soibelman, who showed \cite[Section 3.5]{KS} that a certain property relating to growth rates of DT invariants depends only on the connected component containing a given stability condition. Since it is immediate from Theorem \ref{thm_intro} that this  property  holds for invariant stability conditions, we have the following result:

\begin{thm}
    For the local threefold $X=\omega_Z$ corresponding to the del Pezzo surface  $Z=\bP^1\times \bP^1$ there is a connected component of $\Stab(X)$ such that the DT theory associated to any stability condition in this component defines an analytic wall-crossing structure.
\end{thm}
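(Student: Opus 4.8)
The plan is to combine the explicit computation of DT invariants in Theorem~\ref{thm_intro} with the invariance result of Kontsevich and Soibelman \cite[Section 3.5]{KS}, according to which the property that the DT theory of a stability condition defines an analytic wall-crossing structure depends only on the connected component of $\Stab(X)$ containing it. So it suffices to exhibit a \emph{single} stability condition with this property lying in a well-defined connected component, and for this we use an invariant stability condition, whose DT invariants are completely described.

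First I would pin down the relevant component. The constructions of the preceding sections produce, via the McKay equivalence $\Phi\colon \D_c(X)\to \D^H_c(Y)$ (equivalently via the tilting equivalence with the Jacobi algebra of $(Q,W)$), the nonempty closed subset $\Stab_0(X)^G\subset \Stab(X)$ of invariant stability conditions, identified with the standard component $\Stab_0(Y)$; being the continuous image of a connected space it is connected, hence contained in a unique connected component $\cC$ of $\Stab(X)$, and every invariant $\sigma$ lies in $\cC$. Next I would recall the precise form of the analyticity condition of \cite[Section 3.5]{KS}: beyond the support property, which holds for any stability condition, it amounts to a bound on the growth of the DT invariants along rays (no faster than exponential in $|\gamma|$). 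For an invariant $\sigma$, Theorem~\ref{thm_intro} lists all nonzero $\Omega_\sigma(\gamma)$ and in particular shows $|\Omega_\sigma(\gamma)|\leq 4$ for every $\gamma$; the invariants are uniformly bounded, so the growth condition holds in the strongest possible form and the associated wall-crossing structure is analytic.

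It then remains to quote the Kontsevich--Soibelman invariance statement: since $\sigma\in \cC$ has the analyticity property and this property is constant on connected components, the DT theory of every stability condition in $\cC$ is an analytic wall-crossing structure, which is the assertion. I expect the only real obstacle to be a bookkeeping one, namely matching conventions so that the wall-crossing structure attached here to the family $\gamma\mapsto \Omega_\sigma(\gamma)$ over $\cC$, together with its support property, is exactly the object to which \cite[Section 3.5]{KS} applies; once this identification is made the component-wise invariance theorem can be invoked verbatim, and the sole genuine input — the boundedness of the invariants — is immediate from Theorem~\ref{thm_intro}.
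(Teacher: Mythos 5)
Your proposal matches the paper's argument exactly: the paper likewise deduces the theorem by combining Theorem~\ref{thm_intro} (the explicit, uniformly bounded DT invariants of an invariant stability condition, which immediately give the required growth property) with the Kontsevich--Soibelman result that this property depends only on the connected component of $\Stab(X)$. The extra bookkeeping you supply --- identifying the component as the one containing the connected invariant locus $\Stab_0(X)^G$ --- is a correct and harmless elaboration of what the paper leaves implicit.
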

These examples are the first known instances of analytic wall-crossing structures associated with  local Calabi-Yau threefolds containing compact divisors. Further examples are given by the $Y_{N,0}$ geometries which we treat in Section \ref{banana}.

Another application relates to the DT Riemann-Hilbert Problem (RHP) discussed in \cite{Bridgeland2019}.  First, observe that invariant stability conditions determine a convergent, integral BPS structure in the sense of \emph{loc. cit.}, so that the DT RHP discussed there is well-defined. It was explained in \cite{DelMonte2024} that the corresponding DT RHP admits the following trivial solution:
\[
    X(\gamma) = \exp(Z(\gamma)/\epsilon),
\]
where $Z$ is the central charge function of the invariant stability condition, and $\epsilon$ is an additional parameter introduced in the definition of the RHP.

This statement follows easily (see the proof of \cite[Lemma 4.8]{Bridgeland2019}) from the following claim:  for an invariant stability condition, the partially-defined  automorphisms $\mathbb{S}_\sigma(\ell)$ of the twisted torus $\mathbb{T}$ which describe the jumps in the RHP are all trivial. To see this, recall that $\bS_\sigma(\ell)$ acts on a twisted character  $X(\beta)$ by the formula
\begin{equation}
\label{hi}
    \mathbb{S}_\sigma(\ell)^*(X(\beta)) = X(\beta) \cdot \prod_{Z(\gamma) \in \ell} (1 - X(\gamma))^{\Omega_\sigma(\gamma) \langle \beta, \gamma \rangle},
\end{equation}
where $\Omega_\sigma(\gamma)$ are the BPS invariants, and $\langle -, - \rangle\colon \Gamma\times \Gamma\to \bZ$ is the skew-symmetric Euler pairing on the charge lattice $\Gamma=\bZ^{Q_0}$.  By definition, an invariant stability condition $\sigma=(Z,\cP)\in \Stab(X)$  is invariant under the action of $G$ on the category $\D_c(X)$. The induced action of $G$ on the lattice $\Gamma$  therefore preserves the  central charge $Z\colon \Gamma\to \bC$ and the DT invariants $\Omega_{\sigma}(\gamma)$. Crucially, in this example we also have $\gamma+g(\gamma)\in \ker \langle-,-\rangle$ for any $\gamma\in \Gamma$. The claim $\bS_\sigma(\ell)=\operatorname{id}$ then  follows by combining the terms for $\gamma$ and $g(\gamma)$ in the product  \eqref{hi}. 

Although the solution to the RHP for the invariant stability conditions is trivial, it was shown in \cite{DelMonte2024,DML2023B} that in the vicinity of these points, there exists a codimension-one complex submanifold where the DT invariants remain unchanged, but the solution becomes nontrivial. According to the approach in \cite{DelMonte2024}, the trivial solutions to the RHP correspond to algebraic solutions of cluster integrable systems. In contrast, \cite{DML2023B} argues that more general solutions of the cluster integrable system are associated with the previously mentioned codimension-one locus. For example, in the case of del Pezzo surfaces, these more general solutions correspond to q-Painlevé equations \cite{Bershtein2018}, while for the $\mathbb{Z}_N$ orbifolds of the conifold discussed in Section \ref{banana}, they correspond to non-autonomous cluster Toda chains \cite{Bershtein2019}. Note also that, even at the invariant points, a non-trivial RHP can be obtained by considering non-invariant constant terms, or by doubling the lattice $\Gamma$, as  in \cite[Section 2.8]{Bridgeland2019}.

\section{Geometric approach}

In this section we explain how to use the derived McKay correspondence \cite{BKR} to construct stability conditions on certain local threefolds. We rely on techniques for inducing stability conditions developed by Macr{\`i}, Mehrotra and Stellari \cite{MMS}, and by Polishchuk \cite{Pol}. The exact formulation we use is due to Dell \cite{Dell}. 

\subsection{Conventions and notation}
\label{defs}

All varieties will be over the complex numbers. Suppose $M$ is  a smooth quasi-projective variety. We define $\Coh(M)$ to be the abelian category of coherent sheaves on $M$ and $\Coh_c(M)\subset \Coh(M)$ to be the full subcategory of sheaves with compact support. We define $\D_c(M)$ to be the full triangulated subcategory of the bounded derived category $\D(M)=\D^b(\Coh(M))$ consisting of complexes  whose cohomology sheaves  lie in the subcategory $\Coh_c(M)$.

 Suppose a finite  group $G$ acts on $M$.
 We denote by $\Coh^G(M)$ the abelian category of $G$-equivariant sheaves.  We say that an equivariant sheaf  is compactly supported if the underlying sheaf is. We define $\D^G_c (M)$ to be the full triangulated subcategory of the bounded derived category $\D^b(\Coh^G(M))$ consisting of complexes whose cohomology sheaves have compact support. There is an obvious exact forgetful functor $\Forg\colon \Coh^G(M)\to \Coh(M)$ which induces a  triangulated  functor $\Forg\colon \D^G_c(M)\to \D_c(M)$.

We assume from now on that $G$ is abelian and write $\Hat{G}=\Hom_{\bZ}(G,\bC^*)$ for the group of characters. Each element $\chi\in \Hat{G}$ defines a rank 1 equivariant sheaf $\O_M\tensor_{\bC} \chi$. Tensor product with this sheaf defines an exact functor $T(\chi)\colon \Coh^G(M)\to \Coh^G(M)$. This gives rise to an action of $\Hat{G}$ by  exact auto-equivalences on the category $\Coh^G(M)$ preserving the subcategory of sheaves with compact support. Taking derived functors we obtain a corresponding action of  $\Hat{G}$ by triangulated auto-equivalences on  the category $\D^G_c(M)$.

A stability condition $\sigma=(Z,\cP)$ on a triangulated category $\D$ consists of two pieces of data:
\begin{itemize}
    \item[(i)] for each $\phi\in \bR$ a full subcategory $\cP(\phi)\subset \D$,
    \item[(ii)] a group homomorphism $Z\colon K_0(\D)\to \bC,$
    \end{itemize}
    satisfying a collection of axioms \cite{Stab}. The objects of $\cP(\phi)$ are said to be semistable of phase $\phi$, and the map $Z$ is called the central charge.
    We denote by $\Stab(\D)$ the space of locally-finite stability conditions  on a triangulated category $\D$. Given a finite group $G$ acting by triangulated autoequivalences on $\D$ we denote by $\Stab(D)^G$ the fixed locus for the induced action of $G$ on $\Stab(\D)$.

Suppose that $M$ is a smooth quasi-projective variety. The expression
\[\chi(E,F)=\sum_{i\in \bZ}(-1)^i \dim_{\bC} \Hom(E,F[i])\] descends to a well-defined Euler pairing on Grothendieck groups
\[\chi(-,-)\colon K(\D(M))\times K_0(\D_c(M))\to \bZ.\]
We say that a stability condition  $\sigma=(Z,\cP)$ on the category $\D_c(M)$ is \emph{quasi-numerical} if the central charge $Z$ vanishes on the subgroup $\ker \chi(-,-)\subset K_0(\D_c(M))$.
If $M$ is projective this coincides with the usual notion of a numerical stability condition. 
We write  $\Stab(M)\subset \Stab(\D_c(M))$ for the subset of locally-finite quasi-numerical stability conditions.

\subsection{Inducing stability conditions}

Let $Y$ be a smooth quasi-projective threefold, and let the finite abelian group $H$ act faithfully on $Y$. We assume that the canonical bundle is locally trivial as an $H$-equivariant bundle. This implies  that at each fixed point $y\in Y$ the induced action of $H$ on the tangent space $T_y Y$ factors via the subgroup $\SL(T_{Y,y})\subset \GL(T_{Y,y})$. The quotient $Y/H$ is then a quasi-projective Gorenstein threefold \cite{Kh,Wa}.
Let $X=\HHilb(Y)$ be the Hilbert scheme parameterising $H$-clusters on $Y$, and let  $f\colon X\to Y/H$ denote the natural Hilbert-Chow map. The following is the main result of \cite{BKR}.

\begin{thm}
\label{mckay}
    The map $f$ is a crepant resolution of singularities. Moreover the Fourier-Mukai transform given by the universal $H$-cluster defines an equivalence of categories
    \begin{equation}\label{phi}\Phi\colon \D(X)\to \D^H(Y).\end{equation}
    which restricts to an equivalence on the subcategories of objects with compact support.
\end{thm}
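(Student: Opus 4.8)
The plan is to obtain the theorem from the main result of \cite{BKR} together with a short observation about the Fourier--Mukai kernel. The assertions that $f\colon X\to Y/H$ is a crepant resolution of singularities and that the transform $\Phi$ along the universal $H$-cluster is an equivalence $\D(X)\to\D^H(Y)$ are precisely what is proved in \cite{BKR}, so for those I would simply invoke that paper; the only thing that then remains is to check that $\Phi$ and its quasi-inverse preserve compact support.

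Write $\cZ\subset X\times Y$ for the universal $H$-cluster, with the two projections $\pi_X\colon\cZ\to X$ and $\pi_Y\colon\cZ\to Y$, so that, after restricting the kernel $\O_\cZ$ to $\cZ$, the functor $\Phi$ is the derived functor of $E\mapsto R\pi_{Y*}\pi_X^*E$ carrying its natural $H$-equivariant structure, with $H$ acting trivially on the $X$-factor. The first step is to record two properties of these projections. The map $\pi_X$ is finite, indeed flat and finite of degree $|H|$, since it is the universal family over a Hilbert scheme of $H$-clusters. The map $\pi_Y$ is proper: because the cluster over a point $x\in X$ is supported on the fibre of $\rho\colon Y\to Y/H$ over $f(x)$, the subscheme $\cZ$ lies over the diagonal of $Y/H$, so $\pi_Y$ factors through the fibre product $X\times_{Y/H}Y$, whose projection to $Y$ is a base change of the projective Hilbert--Chow morphism $f$ and hence proper.

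The second step is then immediate. If $E\in\D_c(X)$ has support $S$, the support of $\Phi(E)=R\pi_{Y*}\pi_X^*E$ is contained in $\pi_Y\big(\pi_X^{-1}(S)\big)$; here $\pi_X^{-1}(S)$ is proper over $\operatorname{Spec}\bC$ because $\pi_X$ is finite and $S$ is, and therefore so is its image under the proper map $\pi_Y$. Thus $\Phi$ carries $\D_c(X)$ into $\D^H_c(Y)$. For the reverse inclusion I would use that the quasi-inverse of $\Phi$ is again a Fourier--Mukai transform whose kernel — the derived dual of $\O_\cZ$, twisted by a line bundle and shifted — is again supported on $\cZ$; running the identical argument with the roles of $X$ and $Y$ interchanged (now using that $\pi_Y$ is proper and $\pi_X$ finite) shows that $\Phi^{-1}$ carries $\D^H_c(Y)$ into $\D_c(X)$. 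Combining the two inclusions gives the claimed equivalence on the compactly supported subcategories.

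I do not expect any serious obstacle here: the hard geometric and homological input is already contained in \cite{BKR}, and the compact-support refinement is essentially formal. The one place that warrants a line of care is the properness of $\pi_Y$, where the factorization through $X\times_{Y/H}Y$ does the work; the equivariant bookkeeping requires no thought, since the whole construction — in particular the kernel $\O_\cZ$ — is $H$-equivariant with trivial action on $X$.
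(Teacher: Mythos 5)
Your proposal is correct and matches the paper's treatment: the paper offers no proof of this statement beyond attributing it to \cite{BKR}, which is exactly where you get the crepant-resolution and equivalence assertions. Your additional verification that $\Phi$ and its quasi-inverse preserve compact support --- using that the universal cluster $\cZ$ sits inside $X\times_{Y/H}Y$, so that $\pi_X$ is finite and $\pi_Y$ is proper, and that the inverse kernel is supported on the same subscheme --- is a correct and appropriately brief way to supply the one refinement the paper leaves implicit.
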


Let $G=\Hom_{\bZ}(H,\bC^*)$ be the group of characters. This group acts by auto-equivalences on $\D^H(Y)$ as above and we can use $\Phi$ to transfer this action to $D(X)$. These actions preserve the subcategories of objects with compact support. We define
\begin{equation}\label{th}\Theta=\Forg\circ\, \Phi\colon \D(X)\to \D(Y).\end{equation}
 
Using this functor to induce stability conditions on $\D_c(X)$ gives the following result.

\begin{thm}
\label{main}
    In the above situation there is
an isomorphism of complex manifolds
\begin{equation}
    \label{f}
F\colon \Stab(Y)^{H}\to \Stab(X)^{G}.\end{equation}
 Moreover, given a stability condition $\sigma\in \Stab(Y)^H$ 
 
 \begin{itemize}
     \item [(i)]  an object $E\in D_c(X)$ is semistable of phase $\phi\in \bR$ for the stability condition $F(\sigma)$  precisely if $\Theta(E)\in D_c(Y)$ is semistable of phase $\phi$ for the stability condition $\sigma$,
     \item [(ii)] the central charge of an object $E\in \D_c(X)$ in the stability condition $F(\sigma)$ is equal to the central charge of the object $\Theta(E)\in \D_c(Y)$ in the stability condition $\sigma$. 
 \end{itemize}

\end{thm}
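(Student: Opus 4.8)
The plan is to deduce Theorem \ref{main} from the abstract inducing-stability-conditions result of Dell \cite{Dell} applied to the equivariantization/de-equivariantization adjunction associated to the functor $\Theta$ of \eqref{th}. The key structural input is that, because $G=\Hat H$ acts on $\D(X)$ via the equivalence $\Phi$ with the standard character action on $\D^H(Y)$, the pair $\big(\D(X),\ G\big)$ together with its fixed category is, up to the equivalence $\Phi$, exactly the pair $\big(\D^H(Y),\ \Hat H\text{-action}\big)$, whose fixed-point data is governed by $\D(Y)$ with its residual $H$-action. First I would record the general fact (essentially Elagin's theorem, or the derived Morita theory of \cite{MMS}) that for a finite abelian group $H$ acting on a smooth quasi-projective $Y$, the forgetful functor $\Forg\colon \D^H(Y)\to \D(Y)$ and its adjoint (inflation/induction) exhibit $\D^H(Y)$ as the equivariantization of $\D(Y)$ under the $H$-action, while $\D(Y)$ is the equivariantization of $\D^H(Y)$ under the dual $\Hat H$-action; this is the precise sense in which $\Theta$ is the composite of a de-equivariantization along $G=\Hat H$ followed by the forgetful step, so that Dell's hypotheses apply to $\Theta$.

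Next I would invoke Dell's main theorem in the form: given such a pair of adjoint functors realizing a category as a $G$-equivariantization, there is an isomorphism of complex manifolds between the $G$-fixed locus of the stability space upstairs and the (appropriately defined) stability space downstairs, and this isomorphism is characterised by the property that it sends a stability condition to the one whose semistable objects and central charges are computed by pulling back along the relevant functor. Concretely, Dell gives $\Stab(\D^H(Y))^{\Hat H}\cong \Stab(\D(Y))^{H}$, with the matching of semistable objects and central charges along $\Forg$. Transporting the left-hand side along the equivalence $\Phi$ of Theorem \ref{mckay} — which is $G$-equivariant by construction of the $G$-action on $\D(X)$ — turns it into $\Stab(\D_c(X))^{G}$, and the composite functor $\Theta=\Forg\circ\Phi$ then intertwines the two sides exactly as in statements (i) and (ii). So the proof is: (a) check $\Phi$ is $G$-equivariant for the transferred action, hence induces a homeomorphism $\Stab(\D_c(X))^G\cong \Stab(\D^H_c(Y))^{\Hat H}$ respecting semistables and central charges via $\Phi$; (b) apply Dell to the $(\Forg,\text{adjoint})$ pair on $\D_c(Y)$ to get $\Stab(\D^H_c(Y))^{\Hat H}\cong \Stab(\D_c(Y))^H$ respecting semistables and central charges via $\Forg$; (c) compose, set $F$ to be the resulting isomorphism $\Stab(Y)^H\to\Stab(X)^G$, and read off (i) and (ii) from the functoriality of semistability and central charges under $\Theta$. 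The complex-manifold structure and the fact that $F$ is an isomorphism (not merely a bijection) are part of Dell's statement plus the standard fact that transport along an equivalence of categories is an isomorphism of stability manifolds.

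The one genuinely delicate point — the step I expect to be the main obstacle — is matching the ambient spaces correctly: Dell's theorem is naturally stated for locally-finite (numerical) stability conditions on suitable triangulated categories, and we need it for the \emph{quasi-numerical}, \emph{compactly-supported} variants $\Stab(X)\subset\Stab(\D_c(X))$ and $\Stab(Y)\subset\Stab(\D_c(Y))$ of Section \ref{defs}. So I would need to verify: first, that the functor $\Theta$ and its adjoint restrict to the compactly-supported subcategories (this is stated in Theorem \ref{mckay} for $\Phi$ and is clear for $\Forg$ and for tensoring by the rank-one equivariant sheaves $\O_Y\otimes\chi$); second, that the quasi-numerical condition is preserved under Dell's correspondence — i.e.\ that $F(\sigma)$ is quasi-numerical when $\sigma$ is, and conversely — which follows because $\Theta$ induces a map on Grothendieck groups compatible with the Euler pairings $\chi(-,-)$ on $X$ and $Y$ (the McKay equivalence identifies these pairings, and the forgetful/induction functors are adjoint, so kernels of the Euler forms correspond), hence a central charge vanishing on $\ker\chi$ upstairs corresponds to one vanishing on $\ker\chi$ downstairs. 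Once these compatibilities are in place, the rest is formal: $F$ is the restriction of Dell's isomorphism to the quasi-numerical loci, it is a biholomorphism because it is a restriction of one to a subset cut out by the same linear conditions on both sides, and (i)–(ii) are immediate from the characterising property of Dell's correspondence together with $\Theta=\Forg\circ\Phi$.
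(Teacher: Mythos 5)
Your proposal is correct and follows essentially the same route as the paper: apply Dell's inducing result (\cite[Lemma 2.23]{Dell}) to the forgetful functor on $\D_c(Y)$, transport the resulting bijection of fixed loci along the $G$-equivariant McKay equivalence $\Phi$, and then check that the quasi-numerical loci correspond via the induced isomorphism on Grothendieck groups (the paper does this by citing Elagin for the fact that the inverse is induced by the adjoint up to a factor of $|G|$, together with the Fourier--Mukai nature of both functors), before concluding the biholomorphism from linearity of the map on central charges. The delicate point you single out --- compatibility with the compactly-supported, quasi-numerical variants --- is exactly the point the paper's proof addresses, and by the same mechanism.
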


\begin{proof}
Applying the result of Dell \cite[Lemma 2.23]{Dell} with $G=H$ and $\cD=D_c(Y)$ shows that  the forgetful functor $\D^H_c(Y)\to \D_c(Y)$ induces a continuous bijection
\begin{equation}
    \label{hann}
F\colon \Stab(D_c(Y))^{H}\to \Stab(D^H_c(Y))^{G}.\end{equation}
Combining this with  the McKay equivalence \eqref{phi} immediately gives a continuous bijection
\begin{equation*}F\colon \Stab(D_c(Y))^{H}\to \Stab(D_c(X))^{G}\end{equation*}
satisfying conditions (i) and (ii) of the statement.

At the level of central charges this bijection  arises from an isomorphism  of vector spaces
\begin{equation}
    \label{done?}
K_0(\D_c(Y))^H\tensor_{\bZ}\bQ\isom K_0(\D_c(X))^G\tensor_{\bZ} \bQ.\end{equation} By  construction, this isomorphism  is induced by the functor \eqref{th}. Moreover, it follows from the results of Elagin \cite{Ela} that, up to a factor of $|G|$, the inverse map is induced by the adjoint of this functor. Since both these functors are of Fourier-Mukai type, it is then easy to see  that \eqref{hann} preserves the subsets of quasi-numerical stability conditions. Thus \eqref{hann} restricts to  a continuous bijection \eqref{f}.

The complex structure on the space of stability conditions is defined by pullback along the local homeomorphism to the space of central charges. Since \eqref{done?} is linear isomorphism  it follows that the bijection \eqref{f} is an isomorphism of complex manifolds.
\end{proof}

We give one example of this result now, more will be given below.

\begin{example}
\label{egg}
    Let $H=\bZ_3$ act linearly on $Y=\bC^3$ via multiplication by a non-trivial third root of unity. Then $X=\HHilb(Y)$ is a crepant resolution of $Y/H$ and is therefore isomorphic to the local del Pezzo threefold $X=\omega_Z$ associated to $Z=\bP^2$. There is a connected component $\Stab_0(Y)\subset \Stab(Y)$ which is naturally identified with $\bC$. In the stability condition corresponding to $w\in \bC$ each skyscraper sheaf is stable of phase $\phi=\operatorname{Re}(w)$ and has central charge $Z(\O_y)=e^{\pi i w}$, and these are the only stable objects up to shift. The induced action of $H$ on $\Stab_0(Y)$ is trivial so Theorem \ref{main} gives a connected component $\Stab_0(X)^G\subset \Stab(X)^G$ and an identification $\Stab_0(X)^G\isom \bC$.
\end{example}

\section{Algebraic approach}\label{sec:AlgApproach}

In this section we describe a different approach to constructing  stability conditions on local threefolds. We again use the results on inducing stability conditions of \cite{Dell, MMS, Pol} but this time apply them to categories of representations of quivers with potential.

\subsection{Symmetries of quivers}
By a quiver we mean a finite directed graph which we specify in the usual way by a set of vertices $Q_0$, a set of arrows $Q_1$, and source and target maps $s,t\colon Q_1\to Q_0$.  We denote the corresponding path algebra by $\bC Q$. By a quiver with relations we mean a quiver $Q$ together with a two-sided ideal  $I\subset \bC Q$  generated 
by paths of length $\geq 2$. By a representation of $(Q,I)$ we mean a finite-dimensional left  module of the quotient algebra $A=\bC Q/I$. We  write  $\rep(Q,I)$ for the category of such representations and  $\D(Q,I)=D^b(\rep(Q,I))$ for its bounded derived category.

There is a group homomorphism $\dim\colon K_0(\D(Q,I))\to \bZ^{Q_0}$ sending a representation to its dimension vector. We  denote by $\Stab(Q,I)\subset \Stab(\D(Q,I))$ the closed subspace consisting of stability conditions whose central charge $Z\colon K_0(\D(Q,I))\to \bC$ factors via this map.

By an action of a group $G$ on a quiver $Q$ we mean an action of $G$  on the sets $Q_0$ and $Q_1$ such that the maps $s$ and $t$ are $G$-equivariant.
There is an obvious induced action of $G$ on the path algebra $\bC Q$ and we always assume that  this action preserves the relations, i.e. that $g(I)= I$ for all $g\in G$. We then get an action of $G$ on the algebra $A=\bC Q/I$. We define the category $\rep^G(Q,I)$ of $G$-equivariant representations of $(Q,I)$ to be the category of finite-dimensional $G$-equivariant left $A$-modules. We define $\D^G(Q,I)=D^b(\rep^G(Q,I))$ to be its bounded derived category.

From now on we shall  always assume that $G$ is  finite and abelian, and  that the action on the set $Q_0$ is free. It follows that the action on $Q_1$ is also free. The quotient quiver $Q'=Q/G$ has  vertices $Q_0'=Q_0/G$, and arrows $Q_1'=Q_1/G$. We denote by $\pi\colon Q_0\to Q_0/G$ and $\pi\colon Q_1\to Q_1/G$ the quotient maps. We define  $I'\subset \bC Q'$ to be the image of $I$ under the quotient morphism $\pi\colon \mathbb{C}Q \to \mathbb{C}Q'$. There is a canonical functor
\begin{equation}
    \label{fun}
\Psi\colon \rep(Q',I')\to \rep^G(Q,I),\end{equation}
defined as follows. Given a representation $V'$ of the quiver $Q'$ consisting of vector spaces $V'_{i'}$ for vertices $i'\in Q'_0$ and linear maps $\rho'_{a'}\colon V'_{s(a')}\to V'_{t(a')}$ for arrows $a'\in Q'_1$,  the representation $V=\Psi(V')$ of $Q$ has vector spaces $V_i=V'_{\pi(i)}$ and linear maps $\rho_a=\rho'_{\pi(a)}$. The $G$-structure on $V$ is defined by taking the linearisation isomorphism $\lambda_g(i)\colon V_{g(i)}\to V_i$ to be  the identity for every vertex $i\in Q_0$ and group element $g\in G$. We leave the reader to extend the functor to morphisms and check functoriality.

The functor $\Psi$ is an equivalence of categories.
But  writing down a quasi-inverse requires making choices: different choices lead to isomorphic but strictly different functors. The required choice is a section $c\colon Q'_0\to Q_0$ of the quotient map $\pi\colon Q_0\to Q'_0$. That is, for each vertex $i'\in Q'_0$ we must choose a vertex $c(i')\in Q_0$ such that $\pi(c(i'))=i$.  We then also get a section $c\colon Q'_1\to Q_1$ by mapping an arrow $a'\in Q'_1$ to the unique arrow $a\in Q_1$ such that $\pi(a)=a'$ and $s(a)=c(s(a'))$.

Given the choice of the section  $c\colon Q'_0\to Q_0$ we  define a  quasi-inverse 
\begin{equation}
    \label{fun2}
\Upsilon\colon \rep^G(Q,I)\to \rep(Q',I')\end{equation} to the functor \eqref{fun} as follows. Suppose given a $G$-equivariant representation $V$ of $Q$ consisting of vector spaces $V_i$ for vertices $i\in Q_0$, linear maps $\rho_a\colon V_{s(a)}\to V_{t(a)}$ for arrows $a\in Q_1$, and linearisation isomorphisms $\lambda_g(i)\colon V_{g(i)}\to V_{i}$ for vertices $i\in Q_0$ and elements $g\in G$. The representation $V'=\Upsilon(V)$ of $Q'$ then has vector spaces $V'_{i'}=V_{c(i')}$ and linear maps $\rho'_{a'}=\lambda_{g(a')}(c(t(a')))\circ \rho_{c(a')}$, where $g(a')\in G$ is uniquely defined by the condition that $g(a')\cdot c(t(a'))=t(c(a'))$. Again we leave the reader to extend the functor to morphisms and check functoriality, and also to check that $\Upsilon$ is a quasi-inverse to $\Psi$.

\subsection{Inducing stability conditions}
Let us consider a finite abelian group $G$ acting on a quiver with relations $(Q,I)$ as above. 
There is a natural action of the group of characters $H=\Hom_{\bZ}(G,\bC^*)$ on the category $\rep^G(Q,I)$ and this can be transferred to an action of $H$ on the category $\rep(Q',I')$ via the  equivalence \eqref{fun}. Note that defining this action requires choosing a quasi-inverse to $\Psi$ and hence depends on the choice of section $c\colon Q'_0\to Q_0$ considered above. 

Concretely the  action of $H$ on the category $\rep(Q',I')$ is obtained as follows. As above, the choice of section $c$ gives rise to a labelling of every  arrow $a'\in Q'_1$ by an element $g(a')\in G$ uniquely defined by the condition that $g(a')\cdot c(t(a'))=t(c(a'))$.  Given a representation $V'$ of the quiver $Q'$ consisting of vector spaces $V'_{i'}$ for vertices $i'\in Q'_0$ and linear maps $\rho'_{a'}$ for arrows $a'\in Q'_1$, a character $\chi\in H$ then acts on $V'$ by leaving the vector spaces $V'_{i'}$ unchanged, and rescaling the linear map $\rho'_{a'}$ by the element  $\chi(g(a'))$.

Composing the derived functor associated to \eqref{fun} with the forgetful functor gives a functor
\begin{equation}
    \label{th'}
\Theta\colon \D(Q',I')\to \D(Q,I).\end{equation}
Using this to induce stability conditions leads to the following result.

\begin{thm}
\label{mainalg}
   In the above situation there is
an isomorphism of complex manifolds
\[F\colon \Stab(Q,I)^{G}\to \Stab(Q',I')^{H}.\]
 Moreover, given a stability condition $\sigma\in \Stab(Q,I)^G$
 
 \begin{itemize}
     \item [(i)]  an object $V\in \D(Q',I')$ is semistable of phase $\phi\in \bR$ for the stability condition $F(\sigma)$  precisely if $\Theta(V)\in \D(Q,I)$ is semistable of phase $\phi$ for the stability condition $\sigma$,
     \item [(ii)] the central charge of an object $V\in \D(Q',I')$ in the stability condition $F(\sigma)$ is equal to the central charge of the object $\Theta(V)\in \D(Q,I)$ in the stability condition $\sigma$. 
 \end{itemize}
\end{thm}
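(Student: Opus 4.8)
The plan is to mirror the proof of Theorem~\ref{main} almost verbatim, replacing the geometric McKay setup with the combinatorial picture of the functor $\Psi$ from \eqref{fun}. First I would invoke Dell's result \cite[Lemma 2.23]{Dell}, this time with the group $G$ itself and the category $\cD = \D(Q',I')$, to get a continuous bijection
\[
\Stab(\D(Q',I'))^{H} \xrightarrow{\ \sim\ } \Stab(\D^H(Q',I'))^{G}.
\]
Here I need to check that the hypotheses of Dell's lemma are met for $\rep(Q',I')$: the relevant input is that $\rep^H(Q',I')$ is the category of $H$-equivariant modules over a finite-dimensional algebra on which the finite abelian group $H$ acts, so the general machinery applies. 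Then I would use the equivalence $\Psi\colon \rep(Q',I') \to \rep^G(Q,I)$ (equivalently its derived version) together with the identification $\rep^H(Q',I') \isom \rep(Q,I)$ that comes from unwinding the transferred $H$-action — this is exactly the statement that equivariantizing by the character group $H$ undoes the $G$-quotient, which is the representation-theoretic McKay-type duality $\rep^G(Q,I) \leftrightarrow \rep(Q',I')$ — to convert the right-hand side into $\Stab(\D(Q,I))^{G}$. Composing gives the desired continuous bijection $F\colon \Stab(Q,I)^G \to \Stab(Q',I')^H$ (or its inverse, depending on bookkeeping), and chasing through the construction shows it satisfies properties (i) and (ii), since by construction the functor realizing the passage is $\Theta$ of \eqref{th'}.

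Next I would address the refinement from $\Stab(\D(-))$ to $\Stab(-)$, i.e.\ that $F$ respects the closed subspaces of stability conditions whose central charge factors through the dimension vector map $\dim\colon K_0 \to \bZ^{Q_0}$. On the level of central charges the bijection is induced by a linear isomorphism
\[
K_0(\D(Q',I'))^H \tensor_{\bZ} \bQ \isom K_0(\D(Q,I))^G \tensor_{\bZ} \bQ,
\]
coming from $\Theta$ and, up to a factor of $|G|$, its adjoint (again by Elagin-type arguments, or here more elementarily since everything is an explicit quiver functor). The point to verify is that under this isomorphism the subspace cut out by factoring through $\dim$ on the $(Q,I)$ side matches the corresponding subspace on the $(Q',I')$ side: concretely, $\dim$ of $\Theta(V')$ for a representation $V'$ of $Q'$ is obtained by pulling back the dimension vector along the quotient map $\pi\colon Q_0 \to Q'_0$, so the $G$-invariant part of $\bZ^{Q_0}$ is identified with $\bZ^{Q'_0}$, and factoring through dimension vectors is preserved in both directions. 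Finally, the complex-manifold structure on stability spaces is pulled back from the space of central charges via the local homeomorphism, so the linearity of the above isomorphism upgrades the continuous bijection $F$ to an isomorphism of complex manifolds.

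The main obstacle I anticipate is the bookkeeping around the identification $\rep^H(Q',I') \isom \rep(Q,I)$ and making sure the two transferred group actions are set up consistently — in particular that the $H$-action on $\rep(Q',I')$ defined via rescaling arrows by $\chi(g(a'))$ is genuinely the one for which equivariant objects recover $\rep(Q,I)$, and that this is compatible with the forgetful functor appearing in $\Theta$. This is where the dependence on the section $c\colon Q'_0 \to Q_0$ enters, and one must check that although $\Theta$ and the $H$-action depend on $c$, the resulting isomorphism $F$ of fixed loci does not (or, more modestly, that for each fixed choice of $c$ the theorem holds as stated). Everything else — continuity, the central-charge computation, the quasi-numerical refinement, the complex structure — is a routine transcription of the proof of Theorem~\ref{main}, with the geometric Fourier–Mukai functors replaced by the elementary quiver functors $\Psi$, $\Upsilon$ and $\For$.
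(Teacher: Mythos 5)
Your strategy is workable in outline but it is not the paper's, and the difference matters at one point. The paper applies Dell's lemma to the $G$-action on $\D(Q,I)$, obtaining a continuous bijection $\Stab(\D(Q,I))^{G}\to\Stab(\D^{G}(Q,I))^{H}$ induced by the forgetful functor $\D^{G}(Q,I)\to\D(Q,I)$, and then rewrites the target via the explicitly constructed equivalence $\Psi\colon\D(Q',I')\to\D^{G}(Q,I)$; the composite functor detecting semistability is then literally $\Theta=\Forg\circ\Psi$, so (i) and (ii) hold by construction. You instead apply the lemma to the $H$-action on $\D(Q',I')$, which (a) requires the extra identification $\rep^{H}(Q',I')\isom\rep(Q,I)$ (double equivariantization in the sense of Elagin), a nontrivial step the paper never needs, and (b) produces the map in the opposite direction, induced by the forgetful functor $\D^{H}(Q',I')\to\D(Q',I')$, i.e.\ by a functor $\D(Q,I)\to\D(Q',I')$ going the wrong way relative to $\Theta$. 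Your assertion that (i) and (ii) then hold ``since by construction the functor realizing the passage is $\Theta$'' is therefore not correct as written: to recover the characterization of $F(\sigma)$-semistability via $\Theta$ you would need the additional fact that Dell's correspondence is detected equally by the forgetful functor and by its adjoint (induction), or you should simply flip the application of the lemma as the paper does. (There is also a small mislabeling: Dell's lemma applied to $\cD=\D(Q',I')$ uses the $H$-action, not the $G$-action.) The remaining parts of your argument --- the refinement to central charges factoring through dimension vectors via the identification of $(\bZ^{Q_0})^{G}$ with $\bZ^{Q'_0}$, and the complex-manifold structure coming from linearity of the central-charge isomorphism --- match the paper's proof.
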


\begin{proof}
 Applying the result of Dell \cite[Lemma 2.23]{Dell} with $G=G$ and $\cD=D(Q,I)$ shows that  the forgetful functor $\Forg\colon \D^G(Q,I)\to \D(Q,I)$ induces a continuous bijection
\begin{equation}
    \label{hann}
F\colon \Stab(D(Q,I))^{G}\to \Stab(D^G(Q,I))^{H}.\end{equation}
Combining this with  the equivalence \eqref{fun} immediately gives a continuous bijection
\begin{equation*}F\colon \Stab(D(Q,I))^{G}\to \Stab(D(Q',I'))^{H}\end{equation*}
satisfying conditions (i) and (ii) of the statement.

At the level of central charges this bijection  arises from an isomorphism  of vector spaces
\begin{equation}
    \label{done?2}
K_0(\D(Q,I))^G\tensor_{\bZ}\bQ\isom K_0(\D(Q',I'))^H\tensor_{\bZ} \bQ\end{equation} induced by the functor \eqref{th'}. Given the simple form of $\Theta$ it is then easy to see directly that this descends to an isomorphism $(\bZ^{Q_0})^G\tensor_{\bZ} \bQ\isom (\bZ^{Q'_0})^H\tensor_{\bZ} \bQ$ at the level of dimension vectors. The rest of the argument proceeds as before.
\end{proof}

We give one example of Theorem \ref{mainalg} here. It is essentially the same as  Example \ref{egg} but treated algebraically. 

\begin{example}\label{egg2}
Take $Z=\bP^2$ and let $X=\omega_Z$ be the corresponding local CY threefold. It is well-known that there is a derived equivalence $\D_c(X)\isom \D(Q,I)$, where the quiver $Q$ is shown in  Figure \ref{fig:P2}(a), and the ideal $I$ is generated by commuting relations. The group $G=\bZ_3$ acts on $Q$ by rotations in the obvious way,  and this action preserves the relations $I$. The quotient quiver $Q'=Q/G$  is shown in Figure \ref{fig:P2}(b), and so the relevant algebra  $\bC Q'/I'\isom \bC[x,y,z]$ is the polynomial ring in three variables. Setting $Y=\bC^3$,  there is an equivalence $\D(Q',I')\isom \D_0(Y)$, and  it follows easily that $\Stab(Q',I')=\Stab(Y)$. As in Example \ref{egg}  there is a connected component $\Stab_0(Q',I')=\Stab(Q',I')$, which is isomorphic to $\bC$. Moreover the induced action of  $H=\Hom_{\bZ}(G,\bC^*)$ is trivial. Theorem \ref{mainalg} then gives a connected component $\Stab_0(Q,I)^G\subset \Stab(Q,I)^G$  and an identification $\Stab_0(Q,I)^G\isom \bC$.
\end{example}

\begin{figure}[tp]
\centering
\begin{subfigure}[m]{0.3\linewidth}
    \centering
    \includegraphics[width=\linewidth]{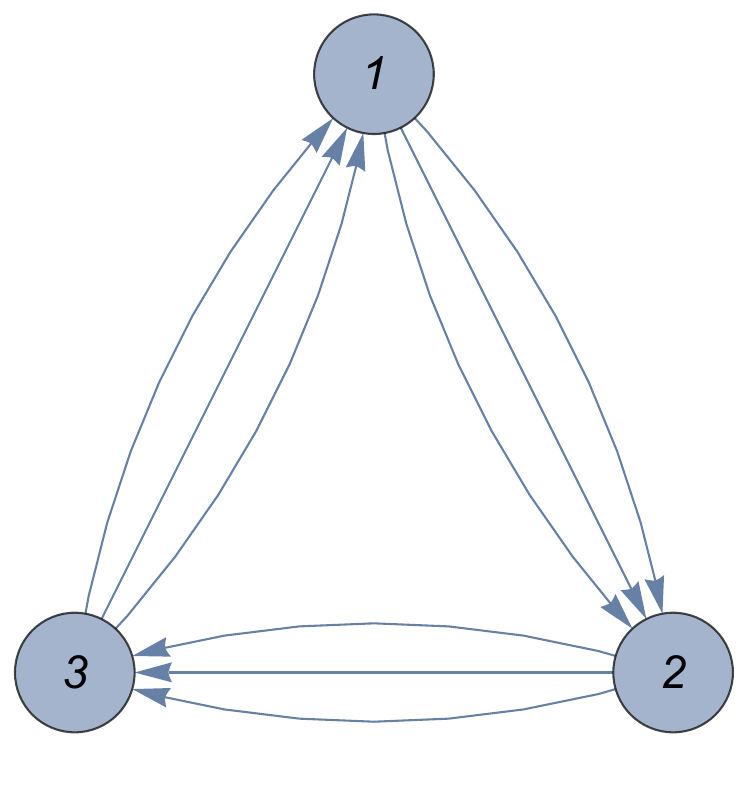}
    \caption{Quiver for $\omega_{\mathbb{P}^2}$.}\label{fig:DimerPdP5}
\end{subfigure}\hfil
\begin{subfigure}[m]{0.3\linewidth}
\includegraphics[width=\linewidth]{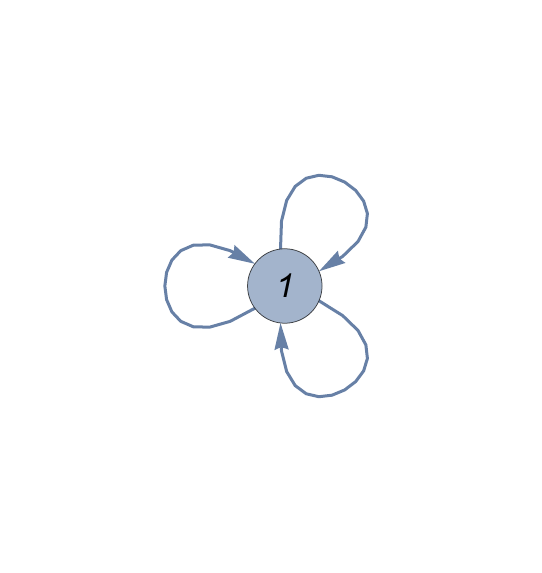}
 \caption{Quiver for $\mathbb{C}^3$}
 \end{subfigure}
 \caption{Quivers for Example \ref{egg2}}\label{fig:P2}
\end{figure}

\subsection{Brane tilings}\label{sec:brane}

 Brane tilings are a useful tool for constructing quivers with potential  whose Jacobi algebras are noncommutative crepant resolutions of  toric varieties. Their significance in this context was first recognized in the physics literature \cite{Hanany2005,Franco2006}. These developments, together  with later advances, were subsequently formalised mathematically in \cite{Broom}. 
 
 A brane tiling   is a   bipartite graph  on $\bR^2$  which is invariant under translations by a lattice $\Gamma=\bZ \cdot \omega_1\oplus \bZ\cdot \omega_2 \subset \bR^2$. Bipartite means that  the vertices  are of two colours, say black and white, and  each edge connects vertices of different colours. It is convenient to view a brane tiling as a bipartite graph  $\Lambda$  on the quotient torus $T=\mathbb R^2/\Gamma$.
 
 A brane tiling  $\Lambda$   has an associated  quiver with potential $(Q,W)$. The  quiver $Q$ is the dual graph to $\Lambda$, oriented so that  arrows go clockwise around white vertices and counterclockwise around black vertices. The  potential $W$ is  the signed sum of the cycles around the vertices  of $\Lambda$, with positive signs for black vertices and  negative signs for white vertices.
 
A perfect matching on a brane tiling $\Lambda$ is a subset of the edges of $\Lambda$  such that each vertex of $\Lambda$ lies on exactly one edge in this subset. Taking the difference of two perfect matchings defines a homology class in $H_1(T,\mathbb Z)$, so that fixing a reference perfect matching one gets a set of points, whose convex hull defines a lattice polygon $V\subset H_1(T,\mathbb R)\isom \mathbb R^2$. A practical way to compute the polygon $V$ is to calculate the determinant of the Kasteleyn matrix of the brane tiling \cite{Hanany2005}.
 
 The work \cite{Broom} gives a sufficient condition, called geometric consistency, for the Jacobi algebra of $(Q,W)$ to define a noncommutative crepant resolution of the toric Calabi-Yau  threefold associated to the cone over the lattice polygon $V$. One considers yet another graph, the quad graph, whose vertices are given by the union of the vertices of the quiver $Q$ and of the brane tiling $\Lambda$, and which has an edge connecting a vertex $e$ of $Q$ to a vertex $v$ of $\Lambda$  if and only if the face of $\Lambda$ corresponding to $e$ has $v$ as a vertex. This  quad graph defines a tiling of the torus whose faces are all quadrilateral. The brane tiling is said to be geometrically consistent if the quad graph admits an embedding in $T$ such that all  edges have the same length.
 
 The following is a combination of Theorem 1.1 and Theorem 8.5 from \cite{Broom}.
 \begin{thm}
     Given a geometrically consistent brane tiling $\Lambda$, the Jacobi algebra of the associated quiver with potential $(Q,W)$ is a noncommutative crepant resolution of the derived category of the toric variety associated to the lattice polygon $V$.
 \end{thm}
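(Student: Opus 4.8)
The plan is to follow Broomhead's strategy, which factors through the intermediate notion of \emph{algebraic consistency}. First I would attach to the brane tiling $\Lambda$ its \emph{algebra of perfect matchings} $B$: a subalgebra of the matrix algebra $\operatorname{Mat}_{|Q_0|}\big(\bC[H_1(T,\bZ)]\big)$ whose $(i,j)$-entry records, for each homotopy class of path in $Q$ from $j$ to $i$, the monomial in perfect matchings that the path cuts out. There is a tautological algebra homomorphism from the Jacobi algebra $A=\operatorname{Jac}(Q,W)$ to $B$ sending an arrow to the sum over perfect matchings containing it, and the tiling is algebraically consistent if this map is injective. The first genuine step is to prove that \emph{geometric consistency implies algebraic consistency}: using the quad graph one puts a flat metric and an $\bR$-valued ``rotation grading'' on the universal cover, and then shows that any two paths in the lifted quiver with the same endpoints are connected by a finite sequence of F-term moves (replacing the path appearing on one side of a relation $\partial_a W$ by the path on the other side). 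Injectivity of $A\to B$ follows because a nonzero path can map to $0$ in $B$ only if it winds back on itself, and the flat structure rules out the short or reducible loops that would obstruct the requisite homotopy. I expect this combinatorial step to be the main obstacle.

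Granting algebraic consistency, the second step is to identify the centre: one shows $Z(A)$ is generated by cyclic paths and is isomorphic, via perfect-matching monomials, to the affine semigroup ring $R$ of the cone over the lattice polygon $V$; since $V$ is placed at lattice height one, this cone is Gorenstein, so $R$ is the coordinate ring of the toric Calabi--Yau threefold in the statement. The third step is to establish that $A$ is a $3$-Calabi--Yau algebra. Here one writes down the standard bimodule complex attached to a quiver with potential,
\[
0\to A\otimes_{\bC Q_0} A \to \bigoplus_{a\in Q_1} Ae_{t(a)}\otimes e_{s(a)}A \to \bigoplus_{a\in Q_1}Ae_{s(a)}\otimes e_{t(a)}A \to \bigoplus_{i\in Q_0} Ae_i\otimes e_iA \to A\to 0,
\]
and the content is that algebraic consistency forces this complex to be exact and self-dual, so that $\operatorname{RHom}_{A^{e}}(A,A^{e})\isom A[-3]$. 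Exactness on the left --- equivalently, that the only syzygies among the relations $\partial_a W$ are the tautological ones --- is where consistency is genuinely used, and is the other delicate point of the argument.

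The final step is to deduce that $A$ is a non-commutative crepant resolution of $R$ in Van den Bergh's sense. From the $3$-Calabi--Yau property together with $R$ Gorenstein one obtains that $A$ is a maximal Cohen--Macaulay $R$-module with $\operatorname{gldim}A=3=\dim R$; algebraic consistency also shows that each $Ae_i$ is a reflexive $R$-module of rank one and that the natural map $A\to \operatorname{End}_R\big(\bigoplus_i Ae_i\big)$ is an isomorphism, which is exactly the definition of an NCCR. Van den Bergh's theorem then upgrades this to a derived equivalence: $\D^b(\operatorname{mod} A)$ is equivalent to $\D^b$ of a crepant resolution of $\operatorname{Spec} R$, i.e.\ of the toric Calabi--Yau threefold associated to $V$, which is the assertion of the theorem. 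I would present Step~1 and the left-exactness in Step~3 in full, and treat the Gorenstein and reflexivity bookkeeping of Steps~2 and~4 more briefly, since these become routine once consistency is available.
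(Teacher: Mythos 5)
The paper offers no proof of this statement: it is quoted verbatim as a combination of Theorems 1.1 and 8.5 of Broomhead's memoir \cite{Broom}, so the ``paper's proof'' is exactly Broomhead's argument. Your outline --- geometric consistency $\Rightarrow$ algebraic consistency via the isoradial/quad-graph structure and F-term homotopies, identification of the centre with the semigroup ring of the cone over $V$, exactness of the bimodule complex giving the $3$-Calabi--Yau property, and Van den Bergh's machinery to conclude the NCCR/derived equivalence --- is a faithful summary of that same route, so it matches the intended proof.
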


 All the brane tilings appearing below can easily be checked to be geometrically consistent, and so the above result applies. To put ourselves in the context of Theorem \ref{mainalg} we will consider a brane tiling $\Lambda$ which is preserved by the action of a finite abelian group $G$ on the torus $T=\bR^2/\Gamma$. 

In most of our examples, the group $G$ will act via translation by a subgroup of $T$.
 More concretely, we can consider lattices $\Gamma \subset \Gamma' \subset \mathbb R^2$ and a bipartite graph  on $\bR^2$ which is invariant under translations by the lattice  $\Gamma'$. Then the quiver with potential $(Q,W)$ associated to the  induced brane tiling on the torus $\mathbb R^2/\Gamma$ carries an action of the group $G=\Gamma'/\Gamma$. Moreover  the quotient quiver with potential $(Q',W')$ is the one associated to the induced  brane tiling on the torus $\mathbb R^2/\Gamma'$.

\section{Main example: local $\bP^1\times \bP^1$}

In this section we present our main example relating to the local Calabi-Yau threefold $\omega_Z$ with $Z=\bP^1\times \bP^1$. After constructing the relevant invariant stability conditions, we compute the corresponding DT invariants.

\subsection{Stability conditions on the resolved conifold}

Let $Y$ be the total space of the bundle $\O_{\bP^1}(-1)^{\oplus 2}$. This is a quasi-projective Calabi-Yau threefold known as the resolved conifold: contracting the zero section gives the threefold ordinary double point $(xy-zw=0)\subset \bC^4$.
The variety $Y$ contains a unique compact curve, namely the zero section $C\subset Y$, which  defines a fundamental class $\beta=[C]\in H_2(Y,\bZ)$. Let $\delta=[y]\in H_0(Y,\bZ)$ be the fundamental class of a point $y\in Y$. Then
\[\Gamma:=H_*(Y,\bZ)=H_2(Y,\bZ)\oplus H_0(Y,\bZ)=\bZ\cdot \beta \oplus \bZ \cdot \delta.\]
The Chern character map together with Poincar{\'e} duality $H^*_{c}(Y,\bZ)\isom H_*(Y,\bZ)$ gives a group homomorphism
\[\ch=(\ch_2,\ch_3)\colon K_0(\D(Y)) \to H_*(Y,\bZ).\]
A stability condition on $\D(Y)$ lies in the subspace $\Stab(Y)$ precisely if its central charge factors via this map. 
We write $\O_C(n)$ for the degree $n$ line bundle supported on the rational curve $C\subset Y$, and $\O_y$ for the skyscraper sheaf supported at a point $y\in Y$. Then
\[\ch(\O_C(n))=\beta+n\delta, \qquad \ch(\O_y)=\delta.\]

The standard t-structure on $\D(Y)$ restricts to give a bounded t-structure on the category $\D_c(Y)$ whose heart can be identified with $\Coh_{c}(Y)\ \D(Y)$.
We denote by $\Stab_0(Y)\subset \Stab(Y)$ the connected component containing stability conditions with this heart. Sending a stability condition to its central charge defines a map
\[\varpi\colon \Stab_0(Y)\to \Hom_{\bZ}(\Gamma,\bC)\]
The following result is proved by the methods of \cite{Toda}, see particularly the Example following Theorem 5.3.

\begin{thm}
\label{conifold}
For any stability condition in $\Stab_0(Y)$ the semistable objects consist of shifts of the following objects
    \begin{itemize}
    \item[(i)] coherent sheaves of the form $\O_C(n)\tensor_{\bC} V$ for $n\in \bZ$ and $V$ a vector space;
        \item[(ii)]zero-dimensional coherent sheaves.
    \end{itemize}
    Moreover the map $\varpi$ is a regular covering map over its image, which is the open subset
\[\{Z\colon \Gamma\to \bC: Z(\beta+n\delta)\neq 0, Z(\delta)\neq 0\}.\]
\end{thm}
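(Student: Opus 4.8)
The statement is the resolved-conifold case of Toda's description of the stability manifold of a crepant small resolution, so the plan is to follow \cite{Toda} (cf.\ the Example after Theorem~5.3 there), organised as follows. \textbf{(1) Pass to an algebraic model.} The bundle $\O_Y\oplus\pi^{*}\O_{\bP^1}(1)$, pulled back from the exceptional collection $[\O,\O(1)]$ on $\bP^1$, is tilting on $Y$, so $\D_c(Y)$ is equivalent to the bounded derived category of finite-dimensional modules over the conifold Jacobi algebra; this carries a finite-length heart $\operatorname{Per}(Y)\subset\D_c(Y)$ of perverse coherent sheaves, related to $\Coh_c(Y)$ by tilting and hence lying in the same component. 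I would record at this point the geometric facts that drive the rest: the $\O_C(n)$ are $3$-spherical, with $\operatorname{Ext}^{\bullet}_Y(\O_C(n),\O_C(n))\isom\bC\oplus\bC[-3]$ (so $\operatorname{Ext}^1$ vanishes); vector bundles on $C\isom\bP^1$ split as sums of the $\O_C(n)$; every zero-dimensional sheaf is an iterated extension of skyscrapers $\O_y$; and the skew Euler form on $\Gamma=\bZ\beta\oplus\bZ\delta$ vanishes, being a skew form on a rank-two lattice with $\chi(\beta,\delta)=\chi(\O_C,\O_y)=0$.

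\textbf{(2) Describe the hearts in the component.} By local finiteness and finiteness of length, every heart occurring in the component is obtained from $\operatorname{Per}(Y)$ by a locally finite sequence of tilts at rigid simple objects, and the input I would take over from \cite{Toda} is the explicit description of this process: the reachable finite-length hearts form a chain controlled by the spherical line bundles $\O_C(n)$, with the skyscrapers $\O_y$ persisting in every heart, and the geometric heart $\Coh_c(Y)$ sitting among these in a known way. This step carries the real weight; the remaining two steps are comparatively formal.

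\textbf{(3) Identify the semistable objects.} Fix $\sigma=(Z,\cP)$ in the component. A $\sigma$-semistable object of phase $\phi$ lies in the finite-length abelian category $\cP(\phi)$, so its Jordan--H\"{o}lder factors are $\sigma$-stable of phase $\phi$ and it is enough to describe the stable objects. A $\sigma$-stable $F$ has $\Hom(F,F)=\bC$, and by the structure of step~(2) its class in $\Gamma$ is a root: if it is $\pm(\beta+n\delta)$ then $F$ is the unique stable object of that class in its heart, so $F\isom\O_C(n)[k]$; if it is $\pm\delta$ then, using $Z(\delta)\neq0$ (so this class is never massless) together with the identification of the moduli of $\delta$-semistable objects with $Y$, $F\isom\O_y[k]$ is a shifted skyscraper; and no other class supports a stable object. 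Reassembling: the stable factors of a semistable $E$ have parallel central charges, and since $Z(\delta)\neq0$ and $Z(\beta+n\delta)\neq0$ for all $n$ this forces them to be either all $\O_C(n)[k]$ for one pair $(n,k)$ --- whence $E\isom\O_C(n)[k]\tensor V$, the extensions splitting because $\operatorname{Ext}^1_Y(\O_C(n),\O_C(n))=0$ --- or all shifted skyscrapers, whence $E$ is a shifted zero-dimensional sheaf. This is exactly the list (i)--(ii) up to shift.

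\textbf{(4) The covering statement.} By \cite{Stab} the central-charge map $\varpi$ is a local homeomorphism onto an open subset of $\Hom_{\bZ}(\Gamma,\bC)\isom\bC^2$. Its image lies in $U:=\{Z:Z(\delta)\neq0,\ Z(\beta+n\delta)\neq0\text{ for all }n\in\bZ\}$ by step~(3), since a semistable object has non-zero central charge. Conversely $U\isom\bC^{*}\times(\bC\setminus\bZ)$ via $Z\mapsto(Z(\delta),Z(\beta)/Z(\delta))$ is connected, and, following Toda, one covers $U$ by the images under $\varpi$ of the sets of stability conditions with the various hearts of step~(2), checks that adjacent charts patch by continuous deformation, and concludes that $\varpi$ maps the component onto $U$ as a covering map. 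It is regular because its deck group is generated by the spherical twists $\operatorname{Tw}_{\O_C(n)}$ --- which act trivially on $\Gamma$, the Euler form there being zero, and so descend to deck transformations of $\varpi$ --- and this group acts transitively on each fibre, the monodromy of a small loop in $U$ around the wall $\{Z(\beta+n\delta)=0\}$ being precisely $\operatorname{Tw}_{\O_C(n)}$.
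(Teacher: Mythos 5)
Your proposal follows exactly the route the paper itself takes: the paper offers no independent proof of this theorem, simply attributing it to the methods of Toda's paper (the Example following Theorem 5.3 there), and your four-step outline is a faithful expansion of that argument, with the genuinely hard input (the chain of tilted hearts and the identification of stable classes with the roots $\pm(\beta+n\delta)$, $\pm\delta$) correctly isolated and deferred to \emph{loc.\ cit.} The one small inaccuracy is in the last step: since $U\isom\bC^*\times(\bC\setminus\bZ)$ has $\pi_1\isom\bZ\times F_\infty$, the monodromy is generated by the spherical twists $\operatorname{Tw}_{\O_C(n)}$ \emph{together with} the even shift $[2]$ (the monodromy of the overall rotation of the central charge coming from the $\bC^*$ factor), not by the twists alone; as $[2]$ also acts trivially on $\Gamma$ it is still a deck transformation, so regularity follows by your argument unchanged.
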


We obtain the familiar picture in Figure~\ref{fig:raydiag} of a countable set of active  rays in $\bC$ spanned by the points $\pm Z(\O_C(n))$ for $n\in \bZ$, each containing a single stable object up to shift, together with a pair of limiting rays $\pm Z(\O_x)$ containing all zero-dimensional sheaves and their shifts.

\subsection{Invariant stability conditions} \label{sec:StablesF0}
 \begin{figure}[h!]
\centering
\begin{subfigure}[m]{0.3\linewidth}
    \centering
    \includegraphics[width=\linewidth]{Figures/QuiverF0.jpg}
    \caption[b]{$Q_{\omega_{\mathbb{P}^1\times\mathbb{P}^1}}$}\label{fig:QuiverP1P1}
 \end{subfigure}\qquad\qquad\qquad
 \begin{subfigure}[m]{0.3\linewidth}
    \centering
    \includegraphics[width=\linewidth]{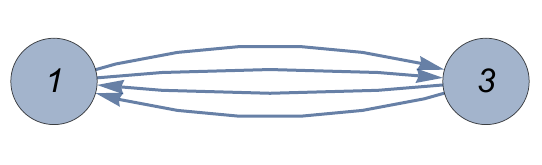}
    \caption[b]{$Q_{\omega_{\mathbb{P}^1\times\mathbb{P}^1}/\mathbb{Z}_2}=Q_{\O_{\mathbb{P}^1}(-1)^{\oplus 2}}$}\label{fig:QuiverPdP5Z2Z2}
 \end{subfigure}
 \caption{}\label{fig:P1P1}
\end{figure}
 Let $H=\bZ_2$ act on the resolved conifold $Y=\O_{\bP^1}(-1)^{\oplus 2}$ by multiplication by $-1$ in the fibres of the bundle. The quotient $Y/H$ has a unique crepant resolution which is isomorphic to the local  threefold  $X=\omega_Z$ where $Z=\bP^1\times \bP^1$.
 
 The group $H$ acts trivially on $\Gamma=H_*(Y,\bZ)$ and preserves the class of semistable objects listed in Theorem~\ref{conifold}. This implies that the induced action of $H$ on $\Stab_0(Y)$ is trivial. The map $F$ of Theorem~\ref{main} defines an embedding 
$\Stab_0(Y)\hookrightarrow\Stab(X)^{G}$.
We refer to the points in the image of this map as invariant stability conditions. 
Denoting by $\Psi\colon \D^H(Y)\to \D(X)$ the inverse of the equivalence \eqref{phi}, we obtain

\begin{thm}
\label{chee}
     Let $\sigma\in \Stab_0(X)^G$ be an invariant stability condition. Then the semistable objects consist of the images under $\Psi$ of the following objects and their shifts
     \begin{itemize}
      \item[(i)] $H$-equivariant coherent sheaves of the form $\O_C(n)\tensor_{\bC} V$ for $n\in \bZ$ and  $V$  a  representation of $H$,
        \item[(ii)]zero-dimensional $H$-equivariant coherent sheaves on $Y$.
    \end{itemize}
\end{thm}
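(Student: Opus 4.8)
The plan is to deduce the claimed description from Theorem~\ref{main}(i) together with the classification of semistable objects on the resolved conifold in Theorem~\ref{conifold}. By definition an invariant stability condition has the form $\sigma = F(\tau)$ for some $\tau\in\Stab_0(Y)$ --- here we use that the $H$-action on $\Stab_0(Y)$ is trivial, so that $\Stab_0(Y)^H=\Stab_0(Y)$ --- where $F$ is the isomorphism of Theorem~\ref{main}. By Theorem~\ref{mckay} every object of $\D_c(X)$ can be written, uniquely up to isomorphism, as $\Psi(\mathcal E)$ with $\mathcal E\in\D^H_c(Y)$, and then $\Theta(\Psi(\mathcal E)) = \Forg(\mathcal E)$ since $\Psi=\Phi^{-1}$ and $\Theta=\Forg\circ\Phi$. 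Theorem~\ref{main}(i) therefore says that $\Psi(\mathcal E)$ is $\sigma$-semistable of phase $\phi$ if and only if $\Forg(\mathcal E)$ is $\tau$-semistable of phase $\phi$; and by Theorem~\ref{conifold} the latter holds if and only if $\Forg(\mathcal E)$ is a shift of a sheaf of the form $\O_C(n)\otimes_{\bC} V$ with $V$ a vector space, or a shift of a zero-dimensional sheaf. So everything reduces to identifying the $H$-equivariant objects $\mathcal E$ whose underlying object has this form.

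First I would reduce to the case where $\mathcal E$ is a sheaf. The functor $\Forg\colon\D^H_c(Y)\to\D_c(Y)$ is $t$-exact for the standard $t$-structures and faithful, hence commutes with cohomology sheaves and detects the vanishing of sheaves; so if $\Forg(\mathcal E)$ is concentrated in a single degree then so is $\mathcal E$, and after applying a shift we may assume $\mathcal E\in\Coh^H(Y)$. If $\Forg(\mathcal E)$ is zero-dimensional then $\mathcal E$ is a zero-dimensional $H$-equivariant sheaf and we are in case (ii). Otherwise $\Forg(\mathcal E)\cong\O_C(n)^{\oplus r}$ for some $n\in\bZ$, and here the key geometric input enters: since $H=\bZ_2$ acts by $-1$ on the fibres of $Y\to\bP^1$ it fixes the zero section $C$ pointwise, so $\O_C(n)$ carries an evident $H$-equivariant structure, and because $\operatorname{End}_Y(\O_C(n))=\bC$ the $H$-equivariant lifts of $\O_C(n)^{\oplus r}$ are classified by group homomorphisms $H\to\GL_r(\bC)$, i.e.\ by $r$-dimensional representations $V'$ of $H$. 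Thus $\mathcal E\cong\O_C(n)\otimes_{\bC}V'$ and we are in case (i). This gives one inclusion.

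For the reverse inclusion, note that for any $\mathcal E$ of the form (i) or (ii) the underlying sheaf $\Forg(\mathcal E)$ is of the type appearing in Theorem~\ref{conifold}, hence $\tau$-semistable, so $\Psi(\mathcal E)$ is $\sigma$-semistable by Theorem~\ref{main}(i); and shifts of semistable objects are semistable. Combining the two inclusions yields exactly the stated list.

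The one step that is not purely formal is the equivariant classification in the second paragraph --- describing all $H$-equivariant structures on $\O_C(n)^{\oplus r}$ --- which relies on $C$ being pointwise fixed by $H$ and on the rigidity $\operatorname{End}_Y(\O_C(n))=\bC$. I expect this to be the only real content; everything else is a mechanical combination of Theorems~\ref{main} and~\ref{conifold}.
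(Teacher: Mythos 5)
Your proposal is correct and follows exactly the route the paper intends: the paper states Theorem~\ref{chee} as an immediate consequence of Theorem~\ref{main}(i) applied to the classification in Theorem~\ref{conifold}, with no further written argument. The only substantive detail you supply beyond what the paper records --- that since $H$ fixes $C$ pointwise and $\operatorname{End}(\O_C(n))=\bC$, the equivariant structures on $\O_C(n)^{\oplus r}$ are exactly the $r$-dimensional representations of $H$ --- is precisely the point the paper leaves implicit, and your treatment of it is correct.
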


The collection of rays is as before: there are two opposite rays for each $n\in \bZ$ appearing in (i) and two limiting rays containing the shifts of the objects in (ii). 
Let $\chi_0,\chi_1$ denote the trivial and non-trivial characters of $H$. Define classes
\[\gamma_1=\ch(\Psi(\O_C\tensor \chi_0)),  \qquad \gamma_2=\ch(\Psi(\O_C(-1)[1]\tensor \chi_0)),\]\[ \gamma_3=\ch(\Psi(\O_C\tensor \chi_1)),  \qquad \gamma_4=\ch(\Psi(\O_C(-1)[1]\tensor \chi_1)).\]
The skyscraper sheaf of a point $y\in C$ fits into a triangle
\[\O_C\to \O_y\to \O_C(-1)[1],\]
which implies that
\[\ch(\Psi(\O_y\tensor \chi_0))=\gamma_1+\gamma_2, \qquad \ch(\Psi(\O_y\tensor \chi_1))=\gamma_3+\gamma_4.\]
Since a point $y\in Y$ is the image under $\Psi$ of a free orbit of $H$ on $Y$ we find that
\[\ch(\O_y)= \delta:=\gamma_1+\gamma_2+\gamma_3+\gamma_4.\]
 The objects in Theorem \ref{chee} (i) have class $p((n+1)\gamma_1+n\gamma_2)+q((n+1)\gamma_3+n\gamma_4)$ where $V=\chi_0^{\oplus p}\oplus \chi_1^{\oplus q}$. The objects in (ii) have classes of the form $s(\gamma_1+\gamma_2)+t(\gamma_3+\gamma_4)$.

\subsection{The DT invariants} \label{sec:DTF0}

We can now compute the DT invariants for the above invariant stability conditions. The only tricky ray is the central one corresponding to zero-dimensional equivariant sheaves on $Y$, but the DT invariants for this ray were computed by Bryan, Cadman and Young \cite{BCY} using the orbifold topological vertex. 

\begin{thm}\label{thm:DTP1P1}
Let $\sigma\in \Stab(X)^G$  be an invariant stability condition as above.  Then  the nonzero DT invariants  are
    \[\Omega((n+1)\gamma_1+n\gamma_2)= \Omega(n\gamma_1+(n+1)\gamma_2)=1, \qquad n\in \bZ\]\[\Omega((n+1)\gamma_3+n\gamma_4)=\Omega(n\gamma_3+(n+1)\gamma_4)=1,\qquad n\in \bZ\]
    \[\Omega((\gamma_1+\gamma_2)+n\delta)=\Omega ((\gamma_3+\gamma_4)+n\delta)=-2, \qquad n\in \bZ,\]\[ \Omega( n\delta)=-4, \qquad n\in \bZ\setminus\{0\}.\]
\end{thm}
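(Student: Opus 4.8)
The plan is to transport everything back to the resolved conifold $Y$ via Theorem~\ref{chee} and Theorem~\ref{main}(ii), and to compute the DT invariants ray by ray. Since $F$ of Theorem~\ref{main} identifies central charges and semistable objects under $\Theta=\For\circ\Phi$, and since by Theorem~\ref{conifold} the semistable objects on $Y$ for a stability condition in $\Stab_0(Y)$ are precisely the $\O_C(n)\tensor V$ and the zero-dimensional sheaves, the semistable locus on $X$ for an invariant $\sigma$ decomposes into the ``outer'' rays spanned by $\pm Z(\gamma_1+k\delta_0)$ etc.\ and the single ``central'' ray carrying all zero-dimensional equivariant sheaves on $Y$ (here $\delta_0=\gamma_1+\gamma_2=\ch\Psi(\O_y\tensor\chi_0)$, and similarly $\gamma_3+\gamma_4$). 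One checks directly from Figure~\ref{fig:raydiag} that the rays $Z(\gamma_1+k\delta_0)$ and $Z(\gamma_3+k\delta_0)$ coincide, as do $Z((\gamma_1+\gamma_2)+k\delta)$ and $Z((\gamma_3+\gamma_4)+k\delta)$, and all of these lie off the central ray $\bR\cdot\delta$.

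First I would treat the outer rays. For a fixed $n$, the only semistable object on $Y$ of that phase with primitive class is $\O_C(n)$, which is spherical and rigid; its $H$-equivariant lifts are $\O_C(n)\tensor\chi_0$ and $\O_C(n)\tensor\chi_1$, with classes $(n{+}1)\gamma_1+n\gamma_2$ and $(n{+}1)\gamma_3+n\gamma_4$ (using the triangle $\O_C\to\O_y\to\O_C(-1)[1]$ twisted appropriately). Because $\Psi$ is an equivalence and these are rigid stable objects with no self-extensions in odd degree, the moduli stack of semistable objects of each such class is a single reduced point with automorphism group $\bC^*$, so the refined DT/BPS invariant is $\Omega=1$; the higher-rank classes $p\gamma_1+\dots$ correspond to $\O_C(n)\tensor V$ and contribute only through the single stable constituent, so no further BPS states arise. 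This gives the first two lines of the statement, including the $n$ and $-1-n$ symmetry coming from $\O_C(n)\leftrightarrow\O_C(-1-n)[1]$.

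The central ray is the substantive case: here the relevant category is that of zero-dimensional $H$-equivariant coherent sheaves on $Y=\O_{\bP^1}(-1)^{\oplus2}$ with $H=\bZ_2$ acting by $-1$ on the fibres, and I would extract the invariants from the orbifold-vertex generating function of Bryan--Cadman--Young~\cite{BCY}. Concretely one matches the $H$-equivariant Hilbert-scheme-of-points (or, better, the DT partition function counting all zero-dimensional equivariant sheaves) of the orbifold $[Y/H]$ near the conifold point to the generating series computed in \emph{loc.\ cit.}, reads off the genus-zero Gopakumar--Vafa/BPS content, and translates via $\ch(\O_y)=\delta$ and $\ch\Psi(\O_y\tensor\chi_i)=\gamma_1+\gamma_2$ or $\gamma_3+\gamma_4$ into the classes $(\gamma_1+\gamma_2)+n\delta$, $(\gamma_3+\gamma_4)+n\delta$, and $n\delta$. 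The bookkeeping should yield $\Omega=-2$ on the two ``half-integer'' sublattices and $\Omega=-4$ on $n\delta$ for $n\neq0$, with the $n=0$ class absent because $\O_y$ itself is not semistable on the central ray (its equivariant summands destabilise it). I expect this translation step — pinning down exactly which specialisation of the Bryan--Cadman--Young series computes \emph{our} stack of semistable objects, and getting the signs and the factor accounting for the two characters of $H$ right — to be the main obstacle; everything else is a direct consequence of Theorems~\ref{main}, \ref{conifold} and~\ref{chee}.
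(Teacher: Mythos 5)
Your proposal follows essentially the same route as the paper: the outer rays are handled by observing that each carries exactly two spherical stable objects $\Psi(\O_C(n)\otimes\chi_p)$ with no extensions between them (hence $\Omega=1$), and the central ray is handled by identifying its semistable category with zero-dimensional sheaves on the orbifold $[Y/H]$ and reading the exponents $-2,-2,-4$ off the Bryan--Cadman--Young degree-zero series, with the Joyce--Song dictionary supplying exactly the translation step you flagged as the main obstacle. The only quibble is your parenthetical about $n=0$: that case is excluded simply because $0\cdot\delta$ is the zero class, not because $\O_y$ fails to be semistable on the central ray.
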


\begin{proof}
For any ray except the central one there is an $n\in \bZ$ such that the ray contains exactly two stable objects $\Psi(\O_C(n)\tensor \chi_p)$ up to shift where $p\in \{1,2\}$. These objects are spherical and there are no extensions between them. This implies that the relevant DT invariants are $+1$.

The category of semistable objects on the central ray is equivalent to the category of zero-dimensional sheaves on the orbifold $\cY=[Y/H]$. The ideal sheaf DT invariants for this orbifold were computed in  \cite{BCY}, see particularly Section 4.3 and Theorem 12. The degree 0 invariants  take the form
\[\underline{\DT}_0(\cY)=\prod_{m\geq 1} (1-q_0^{m} q_1^{m-1})^{-2m}\cdot (1-q_0^{m} q_1^{m+1})^{-2m}\cdot (1-q_0^m q_1^m)^{-4m},\]
where $q_0,q_1$ are  formal variables corresponding to the classes of the equivariant sheaves $\O_y\tensor \chi_0$ and $\O_y\tensor \chi_1$. The stated DT  invariants are obtained from the exponents appearing in this expression  exactly as in \cite[Sections 7.5.2--7.5.4]{JS}.
\end{proof}

\section{Other examples}\label{sec:ex}

The above arguments can be applied to several other cases.

\subsection{(Pseudo) $dP_5$}

The singular toric $CY_3$ with toric diagram as in Figure \ref{fig:PdP5Quotient}\, (\subref{fig:PdP5})
has a resolution $\omega_{PdP_5}$ known as local Pseudo del Pezzo 5 toric threefold. Here $PdP_5$ denotes the toric weak Fano surface obtained by blowing up $\mathbb{P}^2$ at five non-generic points, and as is clear from the toric diagrams in Figure \ref{fig:PdP5Quotient}, the singular Calabi-Yau threefold is a $\mathbb{Z}_2\times\mathbb{Z}_2$ orbifold of the conifold.  The relevant brane tilings are discussed below, and can be found in Figure \ref{fig:dP5Dimers}.
The corresponding quiver with potential $(Q_{PdP_5},W_{PdP_5})$ can be obtained from either brane tiling techniques (model 4a in \cite{Hanany2012}) or from exceptional collections \cite{Beaujard2020}. The quiver is depicted in Figure \ref{fig:dP5Quivers}(a). Note that  the same quiver arises for the generic local del Pezzo 5 threefold, obtained from the blowup of $\mathbb{P}^2$ at five general points (see Section C.7 in \cite{Beaujard2020}), although with a different potential.

The potential for $\omega_{PdP_5}$ is
\begin{gather}
    W_{PdP_5}=-X_{1,3} X_{3,5} X_{5,7} X_{7,1}+X_{1,4} X_{4,6} X_{6,7} X_{7,1}+X_{2,4} X_{4,5} X_{5,7} X_{7,2}-X_{2,3} X_{3,6} X_{6,7} X_{7,2}\nonumber\\
    -X_{1,4} X_{4,5} X_{5,8} X_{8,1}+X_{1,3} X_{3,6} X_{6,8} X_{8,1}+X_{2,3} X_{3,5} X_{5,8} X_{8,2}-X_{2,4} X_{4,6} X_{6,8} X_{8,2}.
\end{gather}
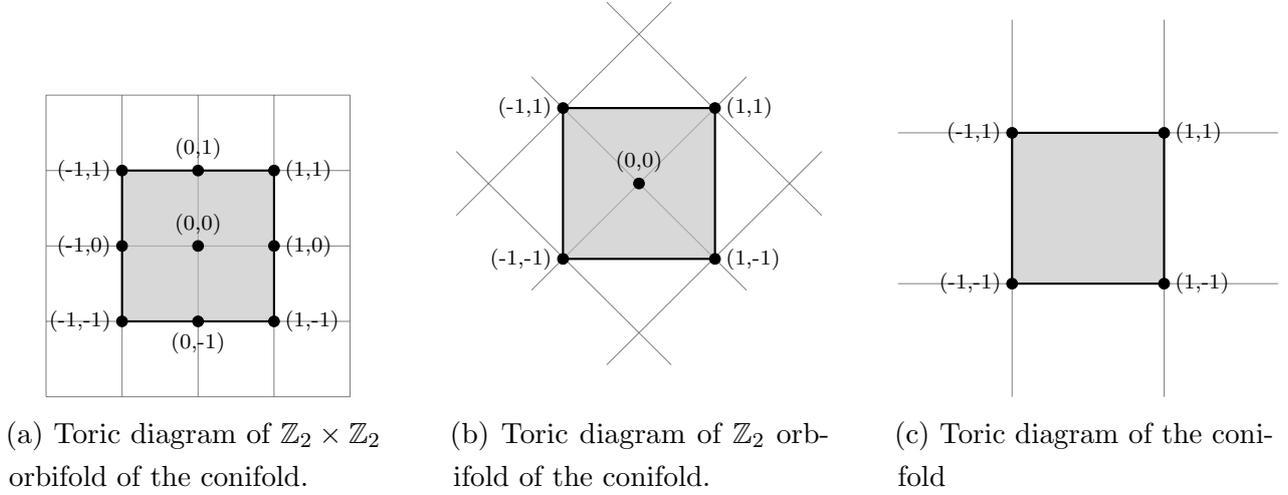
\begin{figure}[t]
    \centering
    \begin{subfigure}[b]{0.28\textwidth}
        \centering
\begin{tikzpicture}
    \draw[very thin, gray] (-2,-2) grid (2,2);
    
    \draw[fill=lightgray, fill opacity=0.6, draw=black, thick] (-1,1) -- (1,1) -- (1,-1) -- (-1,-1) -- cycle;
    
    \filldraw[black,] (-1,1) circle (2pt) node[anchor=east] {\tiny(-1,1)};
    \filldraw[black,] (1,1) circle (2pt) node[anchor=west] {\tiny(1,1)};
    \filldraw[black,] (1,-1) circle (2pt) node[anchor=west] {\tiny(1,-1)};
    \filldraw[black,] (-1,-1) circle (2pt) node[anchor=east] {\tiny(-1,-1)};
    
    \filldraw[black] (0,1) circle (2pt) node[anchor=south] {\tiny(0,1)};
    \filldraw[black] (1,0) circle (2pt) node[anchor=west] {\tiny(1,0)};
    \filldraw[black] (0,-1) circle (2pt) node[anchor=north] {\tiny(0,-1)};
    \filldraw[black] (-1,0) circle (2pt) node[anchor=east] {\tiny(-1,0)};
    
    \filldraw[black] (0,0) circle (2pt) node[anchor=south] {\tiny(0,0)};
\end{tikzpicture}
        \caption{Toric diagram of $\mathbb{Z}_2\times\mathbb{Z}_2$ orbifold of the conifold.}\label{fig:PdP5}
    \end{subfigure}
    \qquad
    \begin{subfigure}[b]{0.28\textwidth}
        \centering
\begin{tikzpicture}
    \draw[very thin, gray,rotate=45] (-2,-2) grid[step=1.4] (2,2);
    
    \draw[fill=lightgray, fill opacity=0.6, draw=black, thick] (-1,1) -- (1,1) -- (1,-1) -- (-1,-1) -- cycle;
    
    \filldraw[black,] (-1,1) circle (2pt) node[anchor=east] {\tiny(-1,1)};
    \filldraw[black,] (1,1) circle (2pt) node[anchor=west] {\tiny(1,1)};
    \filldraw[black,] (1,-1) circle (2pt) node[anchor=west] {\tiny(1,-1)};
    \filldraw[black,] (-1,-1) circle (2pt) node[anchor=east] {\tiny(-1,-1)};
    
    \filldraw[black] (0,0) circle (2pt) node[anchor=south] {\tiny(0,0)};
\end{tikzpicture}
        \caption{Toric diagram of $\mathbb{Z}_2$ orbifold of the conifold.}\label{fig:p1p1}
    \end{subfigure}
    \qquad
    \begin{subfigure}[b]{0.28\textwidth}
        \centering
 \begin{tikzpicture}
    \draw[very thin, gray,shift={(1,1)}] (-3.5,-3.5) grid[step=2] (1.5,1.5);
    
    \draw[fill=lightgray, fill opacity=0.6, draw=black, thick] (-1,1) -- (1,1) -- (1,-1) -- (-1,-1) -- cycle;
    
    \filldraw[black,] (-1,1) circle (2pt) node[anchor=east] {\tiny(-1,1)};
    \filldraw[black,] (1,1) circle (2pt) node[anchor=west] {\tiny(1,1)};
    \filldraw[black,] (1,-1) circle (2pt) node[anchor=west] {\tiny(1,-1)};
    \filldraw[black,] (-1,-1) circle (2pt) node[anchor=east] {\tiny(-1,-1)};

\end{tikzpicture}
        \caption{Toric diagram of the conifold}\label{fig:conifold}
    \end{subfigure}
    \caption{Fans arising from orbifolding $PdP_5$.}\label{fig:PdP5Quotient}
\end{figure}
\begin{figure}[b]
\centering
\begin{subfigure}[m]{0.3\linewidth}
    \centering
    \includegraphics[width=\linewidth]{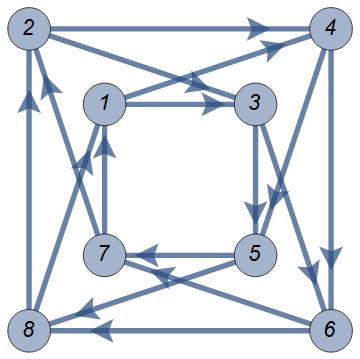}
    \caption{$Q_{PdP_5}$.}\label{fig:QuiverPdP5}
\end{subfigure}\hfil
\begin{subfigure}[m]{0.3\linewidth}
    \centering
    \includegraphics[width=\linewidth]{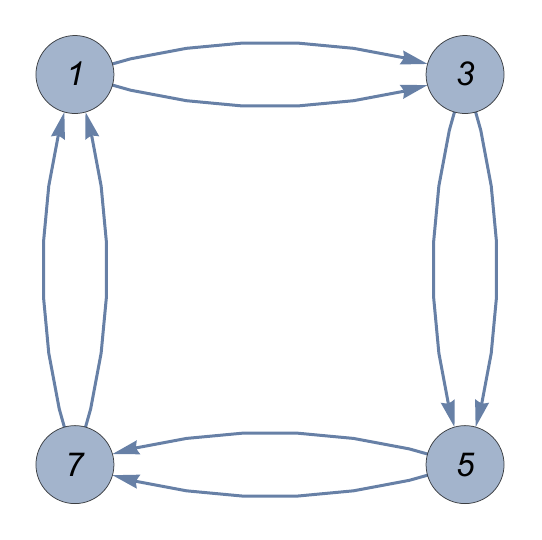}
    \caption[b]{$Q_{PdP_5}/\mathbb{Z}_2=Q_{{\mathbb{P}^1\times\mathbb{P}^1}}$}\label{fig:QuiverPdP5Z2}
 \end{subfigure}
 \begin{subfigure}[m]{0.3\linewidth}
    \centering
    \includegraphics[width=\linewidth]{Figures/dP5Z2Z2Quiver.pdf}
    \caption[b]{$Q_{PdP_5}/\mathbb{Z}_2\times\mathbb{Z}_2=Q_{con}$}\label{fig:QuiverPdP5Z2Z2}
 \end{subfigure}
 \caption{Quotient quivers for $\omega_{PdP5}$.}\label{fig:dP5Quivers}
\end{figure}
We will consider quotients of the quiver $Q_{PdP_5}$ by the $\mathbb{Z}_2\times\mathbb{Z}_2$ group generated by 
\begin{equation}
    \pi_{PdP_5}^{(1)}=(1,2)(3,4)(5,6)(7,8)\quad \pi_{PdP_5}^{(2)}=(1,5)(2,6)(3,7)(4,8),
\end{equation}
and by the $\mathbb{Z}_2$ group generated by only $\pi_{PdP_5}^{(1)}$. The quotient quivers are depicted in Figure \ref{fig:dP5Quivers}, and coincide respectively with the quiver of $\omega_{\mathbb{P}^1\times\mathbb{P}^1}$ and of the resolved conifold. The resulting potentials are
{\smaller\begin{gather}
    \frac{1}{2}W_{PdP_5/\mathbb{Z}_2}=\frac{1}{2}\bigg[-X_{1,3}^{(1)} X_{3,5}^{(1)} X_{5,7}^{(1)} X_{7,1}^{(1)}+X_{1,3}^{(2)} X_{3,5}^{(1)} X_{5,7}^{(2)} X_{7,1}^{(1)}+X_{1,3}^{(1)} X_{3,5}^{(2)} X_{5,7}^{(1)} X_{7,1}^{(2)}\bigg]\nonumber\\
    +\frac{1}{2}\bigg[-X_{1,3}^{(2)} X_{3,5}^{(2)} X_{5,7}^{(2)} X_{7,1}^{(2)}
    -X_{1,3}^{(2)} X_{3,5}^{(2)} X_{5,7}^{(2)} 
    X_{7,1}^{(2)}
    +X_{1,3}^{(1)} X_{3,5}^{(2)} X_{5,7}^{(1)} X_{7,1}^{(2)}
    +X_{1,3}^{(2)} X_{3,5}^{(1)} X_{5,7}^{(2)} X_{7,1}^{(1)}-X_{1,3}^{(1)} X_{3,5}^{(1)} X_{5,7}^{(1)} X_{7,1}^{(1)}\bigg] \nonumber \\
    = -X_{1,3}^{(1)} X_{3,5}^{(1)} X_{5,7}^{(1)}X_{7,1}^{(1)}-X_{1,3}^{(2)} X_{3,5}^{(2)} X_{5,7}^{(2)} X_{7,1}^{(2)}+X_{1,3}^{(2)} X_{3,5}^{(1)} X_{5,7}^{(2)} X_{7,1}^{(1)}+X_{1,3}^{(1)} X_{3,5}^{(2)} X_{5,7}^{(1)} X_{7,1}^{(2)}  
    = W_{\mathbb{P}^1\times\mathbb{P}^1}
\end{gather}}
for the $\mathbb{Z}_2$ quotient, and

{\small\begin{gather}
     \frac{1}{4}W_{PdP_5/\mathbb{Z}_2\times\mathbb{Z}_2}=\frac{1}{2}\left(-X_{1,3}^{(1)} X_{3,1}^{(1)} X_{1,3}^{(2)}X_{3,1}^{(2)}-X_{1,3}^{(2)} X_{3,1}^{(2)} X_{1,3}^{(1)} X_{3,1}^{(1)}+X_{1,3}^{(2)} X_{3,1}^{(1)} X_{1,3}^{(1)} X_{3,1}^{(2)}+X_{1,3}^{(1)} X_{3,1}^{(2)} X_{1,3}^{(2)} X_{3,1}^{(1)}\right)  \nonumber\\
    \mathop{\sim}_{\text{cyclic perm.}}-X_{1,3}^{(1)} X_{3,1}^{(1)} X_{1,3}^{(2)}X_{3,1}^{(2)}+X_{1,3}^{(2)} X_{3,1}^{(1)} X_{1,3}^{(1)} X_{3,1}^{(2)} =W_{\rm conifold}
\end{gather}}
for the $\mathbb{Z}_2\times\mathbb{Z}_2$ quotient. Here we divided the potential $W$ (which is only defined up to an overall scale) by the order of the group, in order to obtain a potential with all coefficients being $\pm1$.

\begin{figure}[h!]
\centering
\begin{subfigure}[t]{0.3\linewidth}
    \centering
    \includegraphics[width=\linewidth]{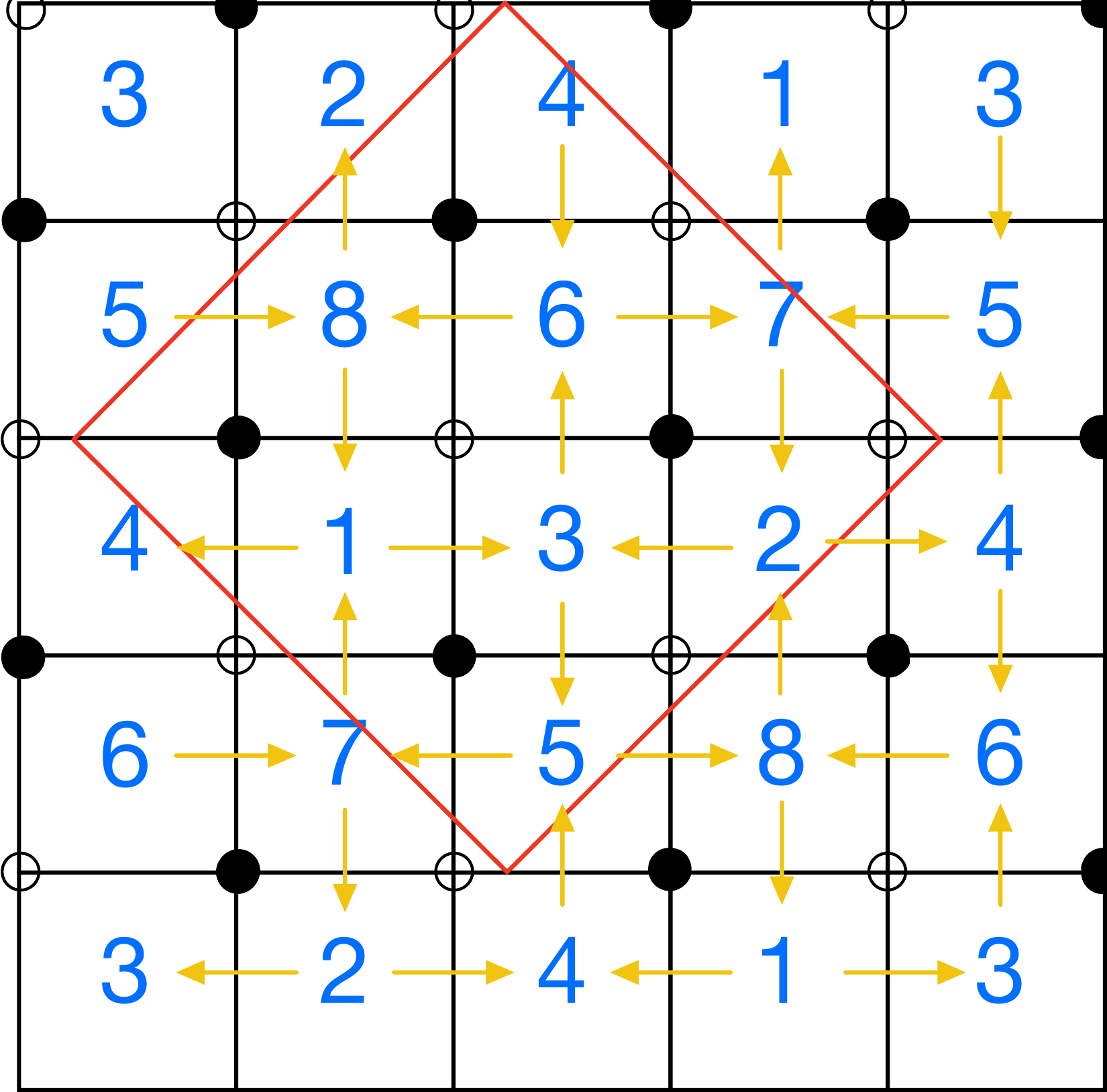}
    \caption{}\label{fig:DimerPdP5}
\end{subfigure}\hfil
\begin{subfigure}[t]{0.3\linewidth}
    \centering
    \includegraphics[width=\linewidth]{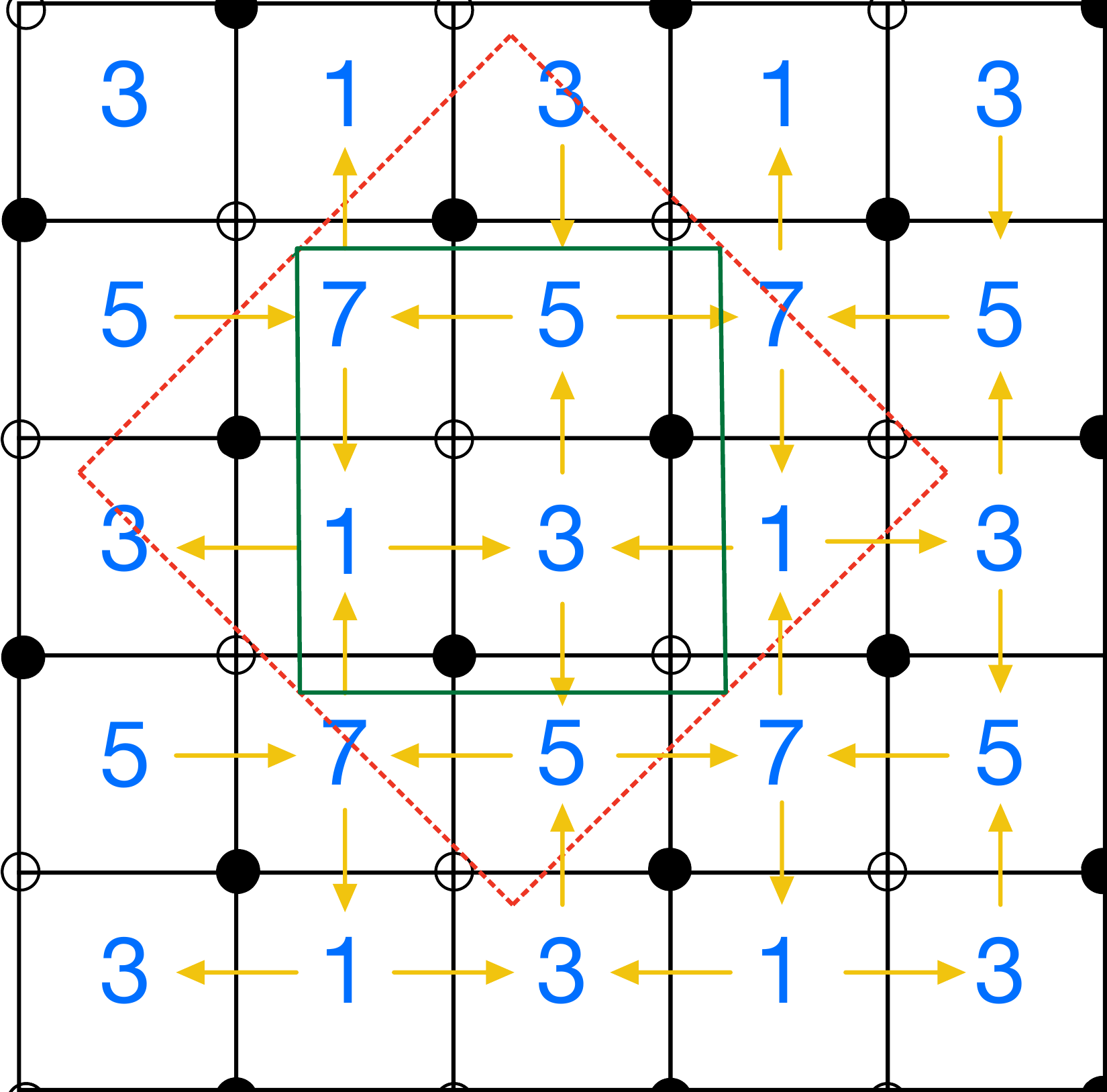}
    \caption[b]{}\label{fig:DimerPdP5Z2}
 \end{subfigure}\hfil
 \begin{subfigure}[t]{0.3\linewidth}
    \centering
    \includegraphics[width=\linewidth]{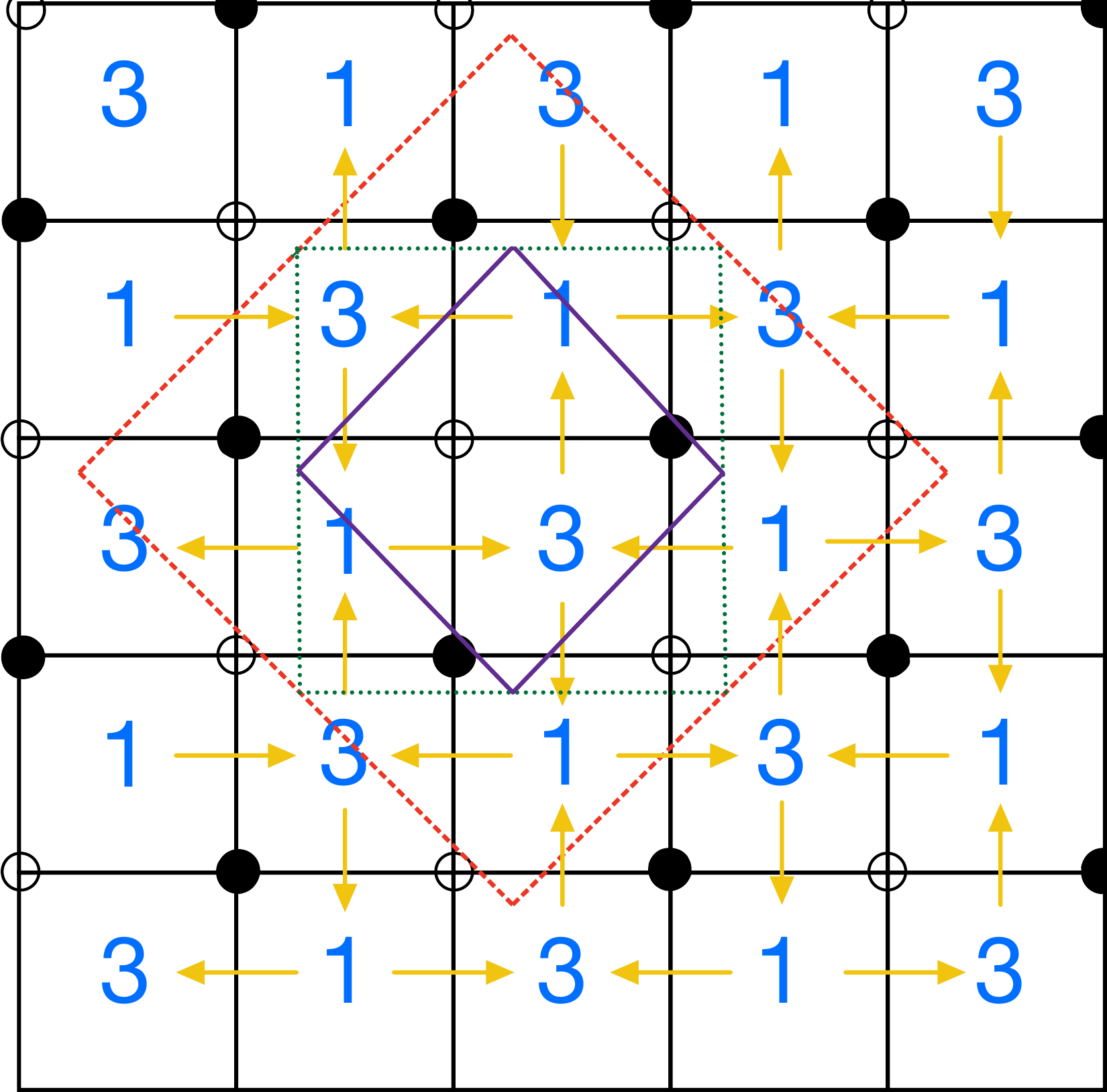}
    \caption[b]{}\label{fig:DimerZ2Z2}
 \end{subfigure}
 \caption{Dimer models and periodic quivers for $\omega_{PdP_5}$ (red fundamental domain), $\omega_{\mathbb{P}^1\times\mathbb{P}^1}$ (green fundamental domain), and the resolved conifold (purple fundamental domain).}\label{fig:dP5Dimers}
\end{figure}

The quotient operation described in this example are best described in terms of dimer models. We define a chain of index 2 subgroups $\Gamma_{con}\subset \Gamma_{\bP^1\times\bP^1}\subset \Gamma_{PdP_5}\subset \bZ^{\oplus 2}$ by
{\small\begin{equation}
    \label{label}
\Gamma_{PdP_5}=\bZ \left(\begin{array}{c} 2 \\  -2 \end{array} \right) \oplus \bZ \left(\begin{array}{c} 2 \\  2 \end{array} \right), \quad \Gamma_{\bP^1\times\bP^1}=\bZ  \left(\begin{array}{c} 2 \\  0 \end{array} \right)\oplus \bZ  \left(\begin{array}{c} 0 \\  2 \end{array} \right)\quad \Gamma_{con}=\bZ \left(\begin{array}{c} 1 \\  -1 \end{array} \right)\oplus \bZ \left(\begin{array}{c} 1 \\  1 \end{array} \right).\end{equation}}

The dimer models  for $\omega_{PdP_5}$, $\omega_{\bP^1\times\bP^1}$ and the resolved conifold respectively are shown in Figure \ref{fig:dP5Dimers}. These are dimer models on tori with lattices given respectively by $\Gamma_{PdP5}$ for $\omega_{PdP_5}$ (Figure \ref{fig:dP5Dimers}(\subref{fig:DimerPdP5})), by the lattice $\Gamma_{\mathbb{P}^1\times\mathbb{P}^1}$ for $\omega_{\mathbb{P}^1\times\mathbb{P}^1}$ (Figure \ref{fig:dP5Dimers}(\subref{fig:DimerPdP5Z2})) and by the lattice $\Gamma_{con}$ for the resolved conifold (Figure \ref{fig:dP5Dimers}(\subref{fig:DimerZ2Z2})). As we discussed in Section \ref{sec:brane}, the quotienting operation on the respective quivers from Figure \ref{fig:dP5Quivers} is realised on the brane tilings by extending the lattice from $\Gamma_{dP_5}$ to $\Gamma_{\mathbb{P}^1\times\mathbb{P}^1}$ or $\Gamma_{con}$.

\subsection{$dP_3$}
Consider the $dP_3$ surface obtained by the blowup of three general points in $\mathbb{P}^2$, and the local $CY_3$ $\omega_{dP_3}$, resolving the singularity given by the toric diagram in Figure~\ref{fig:DimerAndToricDiagramdP3}(\subref{fig:dP3}).
\begin{figure}
\centering
\begin{subfigure}[t]{0.4\linewidth}
        \centering
    \includegraphics[width=0.5\linewidth]{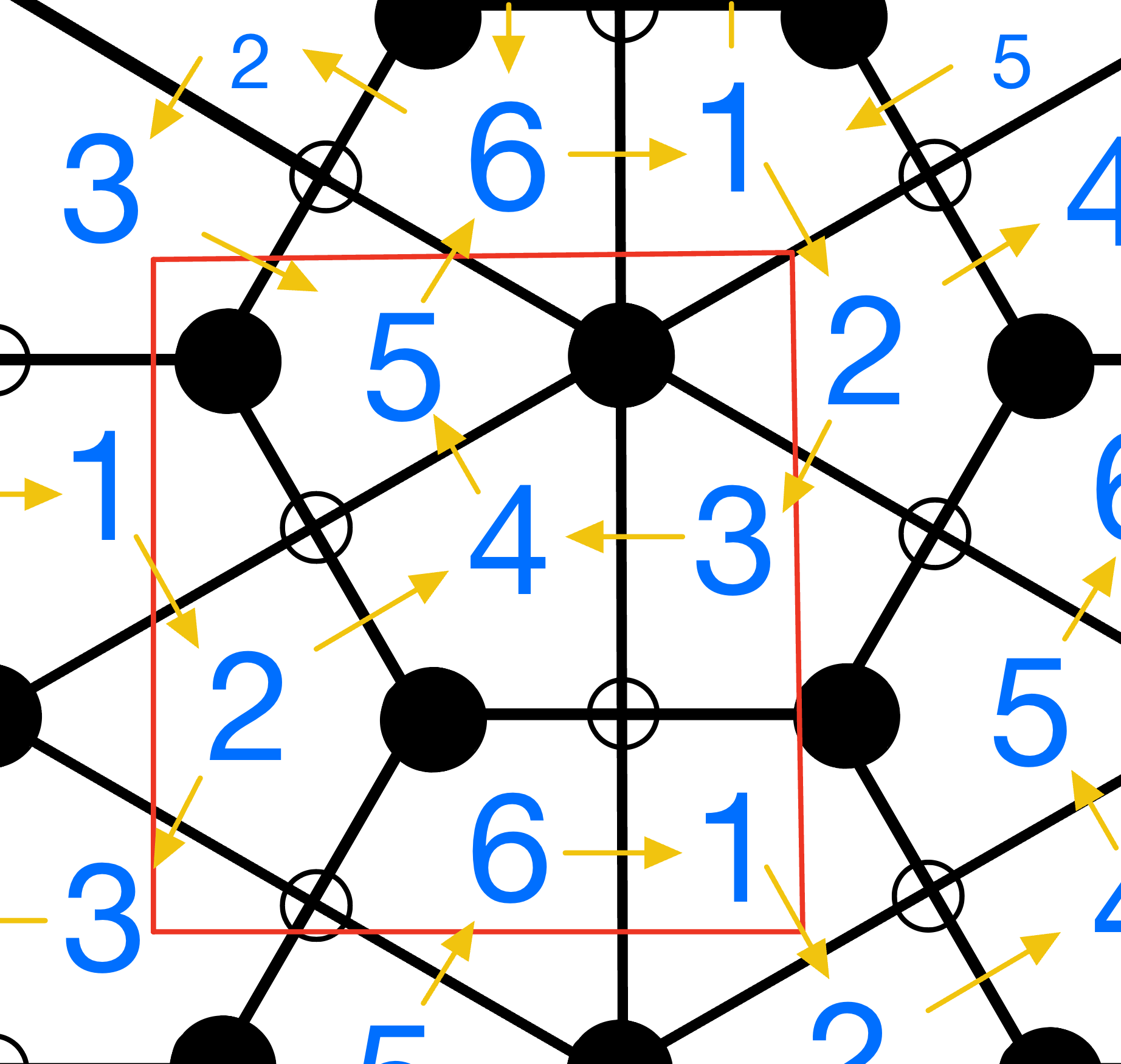}
    \caption{Dimer model for $\omega_{dP_3}$}
    \label{fig:dP3dimer}
\end{subfigure}
\begin{subfigure}[t]{0.4\linewidth}
\centering
\begin{tikzpicture}
    \draw[very thin, gray] (-2,-2) grid (2,2);
    
    \draw[fill=lightgray, fill opacity=0.6, draw=black, thick] (-1,-1) -- (0,-1) -- (1,0) -- (1,1) -- (0,1) -- (-1,0) -- cycle;
    
    \filldraw[black,] (1,1) circle (2pt) node[anchor=west] {\tiny(1,1)};
    \filldraw[black,] (-1,-1) circle (2pt) node[anchor=east] {\tiny(-1,-1)};
    
    \filldraw[black] (0,1) circle (2pt) node[anchor=south] {\tiny(0,1)};
    \filldraw[black] (1,0) circle (2pt) node[anchor=west] {\tiny(1,0)};
    \filldraw[black] (0,-1) circle (2pt) node[anchor=north] {\tiny(0,-1)};
    \filldraw[black] (-1,0) circle (2pt) node[anchor=east] {\tiny(-1,0)};
    
    \filldraw[black] (0,0) circle (2pt) node[anchor=south] {\tiny(0,0)};
\end{tikzpicture}
\caption{Toric diagram of local $dP_3$.}\label{fig:dP3}
\end{subfigure}
\caption{}\label{fig:DimerAndToricDiagramdP3}
\end{figure}
The brane tiling in Figure \ref{fig:DimerAndToricDiagramdP3}(\subref{fig:dP3dimer}) (model 10a in \cite{Hanany2012})
gives rise to the quiver in Figure~\ref{fig:QuiversdP3}(\subref{fig:QuiverdP3})
with potential

\begin{gather}
    W_{dP_3}  = X_{12}X_{23}X_{34}X_{45}X_{56}X_{61}-X_{23}X_{35}X_{56}X_{62}-X_{13}X_{34}X_{46}X_{61}-X_{12}X_{24}X_{45}X_{51} \nonumber\\
    +X_{13}X_{35}X_{51}+X_{24}X_{46}X_{62}\,.
\end{gather}
\begin{figure}[h]
\centering
\begin{subfigure}[t]{0.25\linewidth}
    \centering
    \includegraphics[width=\linewidth]{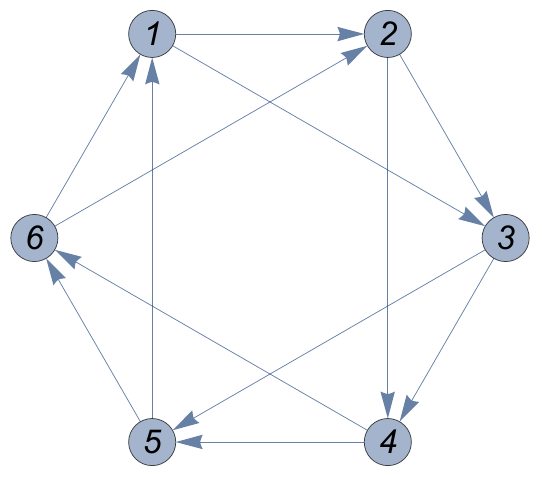}
    \caption{$Q_{dP_3}$}\label{fig:QuiverdP3}
\end{subfigure}\hfil
\begin{subfigure}[t]{0.25\linewidth}
    \centering
    \includegraphics[width=\linewidth]{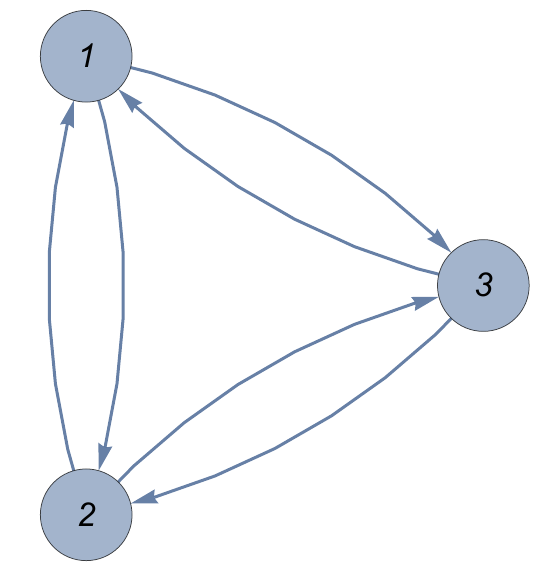}
    \caption{$Q_{dP_3}/\mathbb{Z}_2$}\label{fig:QuiverdP3Z2}
\end{subfigure}\hfil
\begin{subfigure}[t]{0.25\linewidth}
    \centering
    \includegraphics[width=\linewidth]{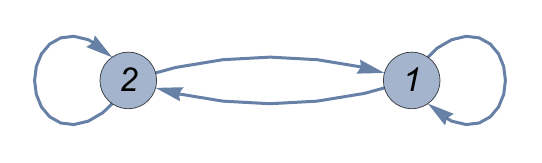}
    \caption{$Q_{dP_3}/\mathbb{Z}_3$}\label{fig:QuiverdP3Z3}
\end{subfigure}\hfil
\caption{Quiver for $\omega_{dP_3}$ and its quotients.}\label{fig:QuiversdP3}
\end{figure}

The brane tiling has a manifest $\mathbb{Z}_6$ symmetry, and consequently the quiver and the potential are invariant under $\mathbb{Z}_6$ cyclic group generated by the permutation $\pi_{dP_3}=(1,2,3,4,5,6)$. We consider the following two subgroups of this $\mathbb{Z}_6$:
\subsubsection*{Case 1: $\mathbb{Z}_2$ generated by the permutation $\pi_{dP_3}^3$}
 The quotient quiver is in Figure \ref{fig:QuiversdP3}(\subref{fig:QuiverdP3Z2}), and the potential is
\begin{equation}
    W_{dP_3/\mathbb{Z}_2}=(X_{12}X_{23}X_{31})^2-(X_{23}X_{32})^2-(X_{13}X_{31})^2-(X_{12}X_{21})^2+X_{13}X_{32}X_{21}+X_{21}X_{13}X_{32} .
\end{equation}
The stable objects  for the corresponding invariant stability conditions on $\D_c(\omega_{dP_3}).$

\subsubsection*{Case 2: $\mathbb{Z}_3$ generated by the permutation $\pi_{dP_3}^2$}
The quotient quiver is in Figure~\ref{fig:QuiversdP3}(\subref{fig:QuiverdP3Z3}), and the potential is
\begin{equation}
    W_{dP_3/\mathbb{Z}_3}=(X_{12}X_{21})^3-X_{21}X_{11}X_{12}X_{22}-X_{11}X_{12}X_{22}X_{21}-X_{12}X_{22}X_{21}X_{11}+(X_{11})^3+(X_{22})^3.
\end{equation}

While we will not do it here, the semistable objects and DT invariants for the corresponding invariant stability conditions on $\omega_{dP_3}$ can in principle be computed from those of the simpler quivers \ref{fig:QuiversdP3}(\subref{fig:QuiverdP3Z2}) and \ref{fig:QuiversdP3}(\subref{fig:QuiverdP3Z3}). These should be compared with the lists of semistables for such invariant stability conditions from \cite{DelMonte2024} and \cite{DML2023}. Note that while $\mathbb{Z}_6$ acts by symmetries of the brane tiling, it does not act by translations. Consequently, the quotienting procedure will not produce a new dimer model, and the quotient quivers in Figure \ref{fig:QuiversdP3}(\subref{fig:QuiverdP3Z3}) are not associated to toric Calabi-Yau threefolds.

\subsection{$Y^{N,0}$ geometries}
\label{banana}
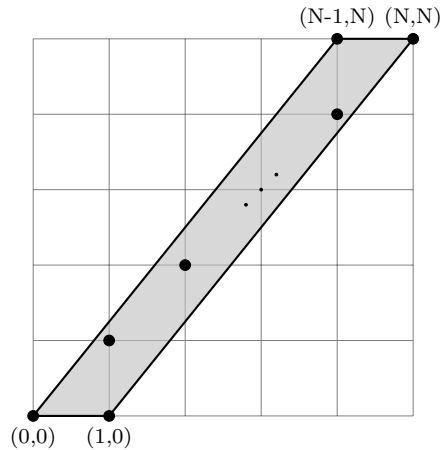
\begin{figure}[b]
\begin{tikzpicture}
    \draw[very thin, gray] (0,0) grid (5,5);
    
    \draw[fill=lightgray, fill opacity=0.6, draw=black,thick] (0,0) -- (1,0) -- (5,5) -- (4,5) -- cycle;
    
    \filldraw[black] (0,0) circle (2pt) node[anchor=north] {\tiny(0,0)};
    \filldraw[black] (1,0) circle (2pt) node[anchor=north] {\tiny(1,0)};
    \filldraw[black] (5,5) circle (2pt) node[anchor=south] {\tiny(N,N)};
    \filldraw[black] (4,5) circle (2pt) node[anchor=south] {\tiny(N-1,N)};
    \filldraw[black] (1,1) circle (2pt);
    \filldraw[black] (2,2) circle (2pt);
    \filldraw[black] (4,4) circle (2pt);
    \filldraw[black] (2.8,2.8) circle (0.5pt);
        \filldraw[black] (3,3) circle (0.5pt);
            \filldraw[black] (3.2,3.2) circle (0.5pt);
\end{tikzpicture}
\caption{Toric diagram of the $Y^{N,0}$ singularity.}\label{Fig:YN}
\end{figure}
We will conclude our list of examples with the computation of the semistable objects and their DT invariants in a more complicated class of geometries, with $N-1$ compact divisors, $N\in\mathbb{Z}$ $Y^{N,0}$ geometries \cite{Gauntlett2004}. These are  
\begin{figure}[h]
\begin{center}
\begin{subfigure}{.5\textwidth}
\centering
\includegraphics[width=.6\textwidth]{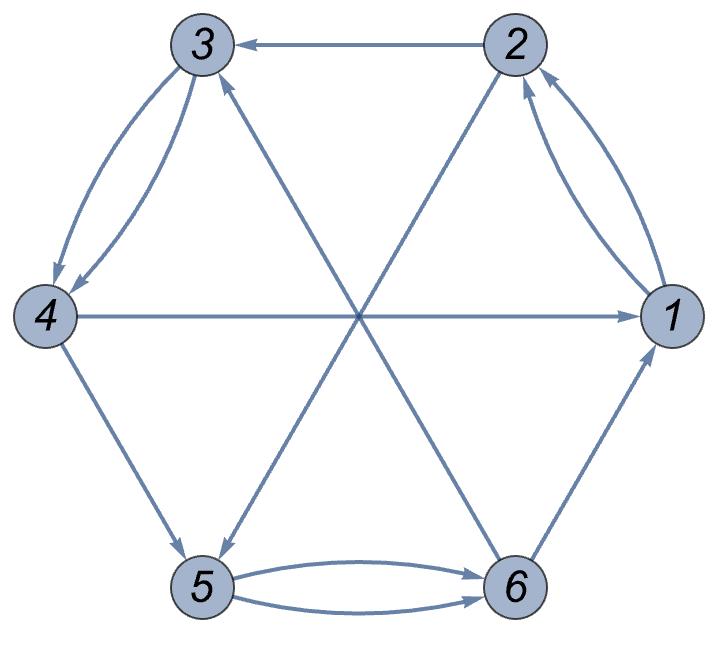}
\caption{Quiver for $Y^{3,0}$}
\label{Fig:QuiverP1P1}
\end{subfigure}\hfill
\begin{subfigure}{.5\textwidth}
\centering
\includegraphics[width=.6\textwidth]{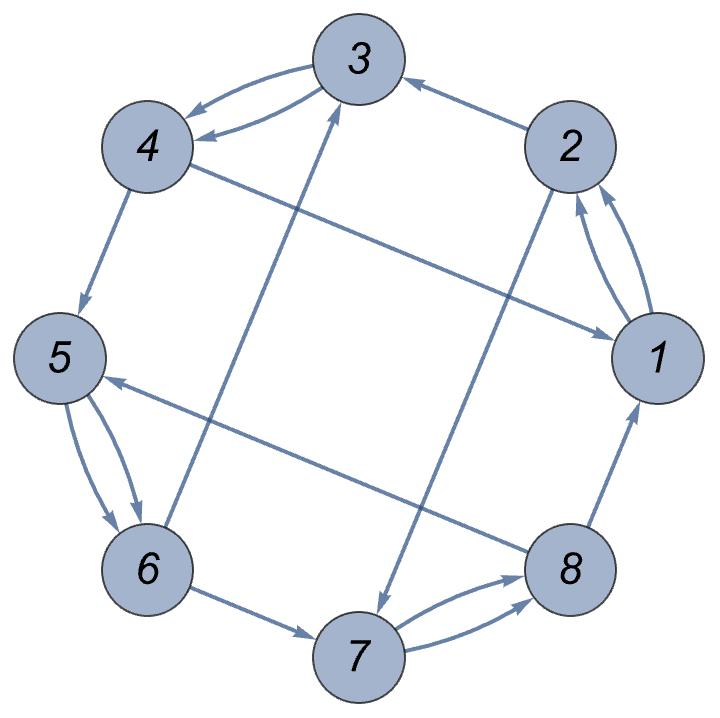}
\caption{Quiver for $Y^{4,0}$}
\label{Fig:SUNQuiver}
\end{subfigure}
\end{center}
\caption{Examples of quivers of $Y^{N,0}$ geometries}\label{fig:YN0}
\end{figure}
an infinite family of toric $CY_3$, 
resolving the so-called $Y^{N,0}$ singularities with toric diagram shown in Figure~\ref{Fig:YN}. As evidenced by their fan, they are resolutions of $\mathbb{Z}_N$ orbifolds of the conifold, so that
the case $N=2$ is the $\omega_{\mathbb{P}^1\times\mathbb{P}^1}$ geometry, while the case $N=1$ is the resolved conifold.

We can construct vectors $\gamma_i$ of Chern characters from those of the conifold by tensoring with characters of $\mathbb{Z}_N$, just as we did for $\omega_{\mathbb{P}^1\times\mathbb{P}^1}$ in Section \ref{sec:StablesF0}. Denote the characters of $\mathbb{Z}_N$ by $\chi_0,\dots,\chi_{N-1}$.

If $\Psi$ is the inverse of the equivalence\footnote{We notice that even if the quotient of the conifold can have more than one resolution for $N$ odd, all projective resolutions are toric by \cite[Theorem~15.1.10]{CLS}. In particular the McKay correspondence from Theorem~\ref{mckay} applies to the projective resolution given by Hilb$^{\mathbb Z_N}(Y)$.} in Theorem~\ref{mckay},
the curve $C\subset \mathcal{O}_{\mathbb{P}^1}^{\oplus 2}$ is the zero section and $y$ is a point of the resolved conifold, then
\begin{equation}
\gamma_{2k+1}=\ch\left(\Psi\left(\mathcal{O}_C\otimes\chi_k \right) \right),\qquad \gamma_{2k}=\ch\left(\Psi\left(\mathcal{O}_C(-1)[1]\otimes\chi_k \right) \right),
\end{equation}
\begin{equation}
\gamma_{k}+\gamma_{k+1}=\ch\left(\Psi\left(\mathcal{O}_y\otimes\chi_k \right) \right),\qquad k=1,\dots,N.
\end{equation}
We define combinations
\begin{equation}
    \delta:=\sum_{k=1}^{2N}\gamma_i=\ch\left(\mathcal{O}_p \right),\qquad v_j:=\gamma_{2j-1}+\gamma_{2j}, \quad 1\leq j\leq N
\end{equation}

Essentially the same argument as the proof of Theorem \ref{thm:DTP1P1} leads to the following.

\begin{thm}The nonzero DT invariants for the $\mathbb{Z}_N$-invariant stability conditions on the $Y^{N,0}$ Calabi-Yau threefolds are
    \[\Omega(\gamma_{2j-1}+nv_j)= \Omega(\gamma_{2j}+n v_j)=1, \qquad n\in \bZ,\qquad j=1,\dots,N,\]
    \[\Omega\left(\pm\sum_{j=a}^bv_j+n\delta\right)=-2, \qquad n\in \bZ,\qquad 0<a\le b<N-1,\]\[ \Omega( n\delta)=-2N, \qquad n\in \bZ\setminus\{0\}.\]
    \end{thm}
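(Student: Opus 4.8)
The plan is to reduce the computation of DT invariants on $Y^{N,0}$ to the known orbifold vertex calculation of Bryan, Cadman and Young \cite{BCY}, exactly mirroring the structure of the proof of Theorem \ref{thm:DTP1P1}. First I would apply Theorem \ref{main} (with $Y=\O_{\bP^1}(-1)^{\oplus 2}$ the resolved conifold, $H=\bZ_N$ acting by roots of unity on the fibres, and $X=Y^{N,0}$ the crepant resolution, whose existence and toricity is guaranteed by the footnote citing \cite[Theorem 15.1.10]{CLS}) to identify $\Stab_0(X)^G$ with $\Stab_0(Y)$, so that via $\Theta=\For\circ\Phi$ the semistable objects on $X$ are precisely the $\Psi$-images of the $H$-equivariant lifts of the semistable objects on $Y$ listed in Theorem \ref{conifold}. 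This splits the ray diagram into two types of rays.

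For every ray other than the central one, the category of semistable objects consists of (shifts of) a single pair of spherical objects $\Psi(\O_C(n)\tensor\chi_k)$, $\Psi(\O_C(n-1)[1]\tensor\chi_k)$ — since for a \emph{free} action the equivariant lift of $\O_C(n)$ carrying a one-dimensional representation $\chi_k$ is rigid, and the relevant Ext-groups between the two objects vanish because they already vanish downstairs. Hence these DT invariants are $+1$, giving the classes $\gamma_{2j-1}+nv_j$ and $\gamma_{2j}+nv_j$. The only subtlety here is bookkeeping: translating the labels $(n,k)$ into the combinations $v_j=\gamma_{2j-1}+\gamma_{2j}$ and checking which pair of $\gamma$'s each ray carries, using the triangle $\O_C\to\O_y\to\O_C(-1)[1]$ tensored with $\chi_k$.

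The central ray is the one that requires real input. By Theorem \ref{main}(i) its category of semistable objects is equivalent to the category of zero-dimensional $H$-equivariant sheaves on $Y$, i.e. zero-dimensional sheaves on the orbifold $\cY=[Y/\bZ_N]$. I would invoke \cite{BCY} — specifically their orbifold topological vertex computation of $\underline{\DT}_0(\cY)$ — which expresses the degree-zero partition function as an explicit product over $m\geq 1$ of factors $(1-q_0^{m}\cdots)^{-(\text{exponent})}$ indexed by the characters of $\bZ_N$; the classes $\gamma_k+\gamma_{k+1}$ correspond to the generators $\O_y\tensor\chi_k$. Extracting the BPS/DT invariants $\Omega(\gamma)$ from the exponents in this product — exactly as in \cite[Sections 7.5.2--7.5.4]{JS} — yields $\Omega(\pm\sum_{j=a}^{b}v_j+n\delta)=-2$ for sub-intervals of consecutive $v_j$'s and $\Omega(n\delta)=-2N$. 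The main obstacle is making sure the combinatorics of the $\bZ_N$-orbifold vertex from \cite{BCY} is correctly matched to our labelling: one must identify which effective curve classes on $\cY$ correspond to the $\sum_{j=a}^{b}v_j$ (the "arcs" in the ray diagram of Figure \ref{fig:raydiag}, now with $N$ rays in the upper half-plane), confirm that the multiplicity $-2$ is independent of the length $b-a$ of the arc and of the shift $n$, and verify that the point class $\delta=\sum_{i=1}^{2N}\gamma_i$ picks up exactly the factor with exponent $-2N$. Once this dictionary is in place the theorem follows by the same formal manipulation of generating-function exponents used in the $N=2$ case.
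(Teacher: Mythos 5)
Your proposal is correct and follows essentially the same route as the paper: rigid spherical stable objects with no Ext-groups between them give $\Omega=+1$ on the non-central rays, and the central ray is handled by reading off exponents from the Bryan--Cadman--Young degree-zero orbifold DT partition function exactly as in the $N=2$ case. One small bookkeeping slip: a given non-central ray carries the $N$ stable objects $\Psi(\O_C(n)\tensor\chi_k)$, $k=0,\dots,N-1$ (all with the same central charge, since the central charge factors through the forgetful functor), rather than the pair $\Psi(\O_C(n)\tensor\chi_k)$, $\Psi(\O_C(n-1)[1]\tensor\chi_k)$, which generically lie on different rays; this does not affect the conclusion $\Omega=1$ for each class $\gamma_{2j-1}+nv_j$, $\gamma_{2j}+nv_j$.
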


    \begin{proof}
        As in the case of local $\bP^1\times \bP^1$, we can immediately see that on all rays except the central one lie exactly $N$ stable spherical objects of the form $\Psi\left(\mathcal{O}_C(n)\otimes\chi_p \right)$, $p=0,\dots,N-1$, with DT invariants $\Omega=1$. The semistable objects on the central ray will again be zero-dimensional sheaves on the orbifold. The degree zero DT generating function is \cite{BCY}
\begin{equation}
    \underline{\DT}_0(\cY)=\prod_{m\geq 1} \left[(1-q_0^m q_1^m)^{-2Nm}\cdot\prod_{0<a\le b<N}\left(1-q^m\prod_{j=a}^bq_j\right)^{-2m}\cdot\left(1-q^m\prod_{j=a}^bq_j^{-1}\right)^{-2m} \right],
\end{equation}
from which we can read off $\Omega(n\delta)=-2N$ and $\Omega\left(\pm\sum_{j=a}^bv_j+n\delta\right)=-2$, completing the proof.
    \end{proof}
    
For $N=1$, this formula reproduces the DT invariants for the resolved conifold, for $N=2$ it reproduces the DT invariants for the $\mathbb{Z}_2$-invariant stability conditions on $\omega_{\mathbb{P}^1\times\mathbb{P}^1}$.
As remarked in the introduction, the invariant stability conditions for $Y^{N,0}$ geometries give an infinite class of examples of analytic wall-crossing structures associated with local Calabi-Yau threefolds with  compact divisors, and also an infinite class of examples of trivial solutions to DT Riemann-Hilbert problems. 

We could also have worked directly in terms of quivers with potential, following the algebraic approach of Section  \ref{sec:AlgApproach}. For our present case, these have been obtained by brane tiling techniques \cite{Benvenuti2004,Franco2006}. In particular, we consider the ``uniform dimer model'' constructed in \cite{Bershtein2019}, obtained by quotienting the square bipartite tiling of $\mathbb{Z}^2$ by the lattice $\Gamma=\mathbb{Z}\left( \begin{array}{c}
N \\ N
\end{array} \right)\oplus \mathbb{Z}\left( \begin{array}{c}
0 \\ 2
\end{array} \right). $ 
They give rise to quivers with adjacency matrix
\begin{align}
B_{k,k+1}= \begin{cases}
2, &k\text{ odd,} \\
1, & k\text{ even},
\end{cases}
&&
B_{k,k+3}= \begin{cases}
-1, & k\text{ odd} \\
0 & k\text{ even},
\end{cases}\nonumber
\end{align}
\begin{align}\label{eq:BMatrixYN}
B_{1,2N}=-1,\quad B_{3,2N}=-1,\quad B_{2N-1,2N}=2,
\end{align}
and they are shown in Figure~\ref{fig:YN0}(\subref{Fig:SUNQuiver}) for the cases $N=3,4$. The potential is \cite{Benvenuti2004}
{\small\begin{equation}
    W_{Y^{N,0}}=\sum_{k=1}^N\left(X_{2k-1,2k}^{(1)}X_{2k,2k+1}X_{2k+1,2k+2}^{(2)}X_{2k+2,2k-1}-X_{2k-1,2k}^{(2)}X_{2k,2k+1}X_{2k+1,2k+2}^{(1)}X_{2k+2,2k-1} \right),
\end{equation}}
where the indices of the potential terms should be considered modulo $2N$. The $\mathbb{Z}_N$ group acts on the quiver labels as the permutation $\pi_{Y^{N,0}}=(1,3,\dots,N)(2,4,\dots,2N)$, that also leaves $W_{Y^{N,0}}$ invariant, and it is easy to check that the potential obtained by the quotient is
\begin{equation}
    \frac{1}{N}W_{Y^{N,0}/\mathbb{Z}_N}=W_{\rm conifold}.
\end{equation}

\bibliographystyle{alph}
\bibliography{Biblio}

\end{document}